\newtheorem{thm}{Theorem}[section]
\newtheorem{prop}[thm]{Proposition}
\newenvironment{pf*}[1]{\proof[#1]}{\endproof}
\newcommand{\cal}[1]{{\mathcal #1}}
\theoremstyle{definition}
\newtheorem{defn}{Definition}[section]
\theoremstyle{remark}
\newcommand{\diam}{\operatorname{diam}}
\newcommand{\dist}{\operatorname{dist}}
\renewcommand{\mod}{\operatorname{mod}}
\newcommand{\tl}{\tilde}
\newcommand{\wtl}{\widetilde}
\newcommand{\eps}{\epsilon}
\newcommand{\ceq}{\,\displaystyle{\Large\mbox{$\sim$}}_{\text{\hspace{-14pt}\tiny conf}}\,}
\newcommand{\Ccal}[1]{{{\cal{#1}}}}
\newcommand{\aaa}[1]{{{\mathbf{#1}}}}
\newcommand{\crit}{{{\aaa C}}}
\newcommand{\cu}{{\aaa C}_U}
\newcommand{\cur}{{({\aaa C}_U^n)^\RR}}
\newcommand{\hol}{{\aaa H}}
\newcommand{\mfld}{{\aaa M}}
\renewcommand{\k}{\kappa}
\renewcommand{\Im}{\operatorname{Im}}
\numberwithin{equation}{section}
\newcommand{\thmref}[1]{Theorem~\ref{#1}}
\newcommand{\cI}{{\cal I}}
\newcommand{\cG}{{\cal G}}
\newcommand{\cH}{{\cal H}}
\newcommand{\cR}{{\cal R}}
\newcommand{\cE}{{\cal E}}
\newcommand{\CC}{{\Bbb C}}
\newcommand{\RR}{{\Bbb R}}
\newcommand{\TT}{{\Bbb T}}
\newcommand{\ZZ}{{\Bbb Z}}
\newcommand{\NN}{{\Bbb N}}
\newcommand{\DD}{{\Bbb D}}
\newcommand{\HH}{{\Bbb H}}
\newcommand{\QQ}{{\Bbb Q}}
\newcommand{\cren}{\cR_{\text cyl}}
\renewcommand{\deg}{\operatorname{deg}}
\newcommand{\Aut}{\operatorname{Aut}}
\begin{document}
\addtolength{\evensidemargin}{-0.7in}
\addtolength{\oddsidemargin}{-0.7in}

\title
{Renormalization of unicritical analytic circle maps}
\thanks{This work was partially supported by NSERC Discovery Grant}
\author{Michael Yampolsky}
\date{\today}
\maketitle

\begin{abstract}
In this paper we generalize renormalization theory for analytic critical circle maps with a cubic critical point  to the case of maps with an arbitrary odd critical exponent by proving a quasiconformal rigidity statement for renormalizations of such maps.
\end{abstract}

\section{Introduction}
Critical circle maps are, along with unimodal maps of the interval, one of the two worked-out examples of renormalization and universality in dynamics. Renormalization theory has been developed for critical circle maps with critical exponent $n=3$. We refer the reader to \cite{Ya3} for a review of the history of the subject and the summary of the main conjectures, known as Landford's Program. These conjectures were settled by the author in \cite{Ya3,Ya4}. 

The purpose of this paper is to extend the renormalization theory to analytic critical circle maps with an arbitrary odd critical exponent
$n\in 2\NN+1$. All of the existing proofs apply to the case of arbitrary $n$ {\it mutatis mutandis} with a single notable exception: 
a result on non-existence of invariant Beltrami differentials for holomorphic extensions of renormalized maps (so-called holomorphic commuting pairs) proven by de~Faria in \cite{dF2}. de~Faria's argument used his construction of holomorphic commuting pairs for renormalizations of maps in the Arnold's family 
$$x\mapsto x+\theta+\frac{1}{2\pi}\sin(2\pi x)$$
(an archetypical family of critical circle maps with a cubic critical point). This proof thus does not generalize to an arbitrary odd $n$. 

In the present paper we present a different argument based on our complex {\it a priori} bounds \cite{Ya1} and (for each $n\in 2\NN+1$)
a construction of a quasiconformaly rigid family of Blaschke fractions which restrict to analytic homeomorphisms of the unit circle with a single critical point  of degree $n$. We thus extend the result on non-existence of invariant Beltrami differentials to the case of an arbitrary odd $n$ and complete the renormalization theory for this case. 

We note that after this paper has appeared as a preprint, we have learned that A.~Vieira in his recent PhD thesis \cite{vieira} has proposed a different approach to constructing quasiconformally rigid model holomorphic commuting pairs based on generalized Arnold's maps.

The structure of the paper is as follows. In the next section \S\ref{section preliminaries} we make the preliminary definitions, and describe the action of renormalization on unicritical circle maps. Our goals -- the renormalization hyperbolicity results for such maps -- are stated in \S~\ref{sec:results}. In \S~\ref{holomorphic pairs} we define holomorphic commuting pairs for an arbitrary odd critical exponent $n$ and state the 
main result on non-existence of invariant Beltrami differentials (Theorem~\ref{thm:main}). \S~\ref{sec:models} contains the construction of rigid Blaschke models. The proof of Theorem~\ref{thm:main} is given in \S~\ref{sec:proof}.

\section{Preliminaries}
\label{section preliminaries}

\noindent
{\bf Some notations.} We will assume that the reader is familiar with 
the renormalization theory of critical circle maps, and will only recall its main
points briefly. For a detailed discussion, we refer the reader to \cite{Ya3,Ya4}.

We use $\dist$ and $\diam$ to denote the Euclidean distance and diameter in $\Bbb C$.
We shall say that two real numbers $A$ and $B$ are $K$-commensurable for $K>1$ if
$K^{-1}|A|\leq |B|\leq K|A|$.
The notation $D_r(z)$ will stand for the Euclidean disk with the center at $z\in\Bbb C$ and
radius $r$. The unit disk $D_1(0)$ will be denoted $\Bbb D$. 
The plane $({\Bbb C}\setminus{\Bbb R})\cup J$
with the parts of the real axis not contained in the interval $J\subset \Bbb R$
removed  will be denoted ${\Bbb C}_J$.
By the circle $\TT$ we understand the affine manifold $\RR/\ZZ$, it is naturally identified
with the unit circle $S^1=\partial\DD$ via $\theta\mapsto e^{2\pi i\theta}$. The real translation $x\mapsto x+\theta$
projects to the rigid rotation by angle $\theta$, $R_\theta:\TT\to\TT$.
For two points $a$ and $b$ in the circle $\TT$
which are not diametrically opposite, $[a,b]$ will denote the shorter
of the two arcs connecting them. As usual, $|[a,b]|$ will denote the length of the 
arc. For two points $a,b\in \Bbb R$, $[a,b]$ will denote the closed interval with
endpoints $a$, $b$ without specifying their order.
The cylinder in this paper, unless otherwise specified will mean the affine manifold $\CC/\ZZ$.
Its equator is the circle $\{\Im z=0\}/\ZZ\subset \CC/\ZZ$.
A topological  annulus $A\subset \CC/\ZZ$ will be called an equatorial annulus, or an 
equatorial neighborhood, if it has a smooth boundary and contains the equator.

\begin{defn}
An analytic unicritical circle map is an analytic orientation preserving homeomorphism of
 $\TT$ with a single critical
point $c$. 
To fix our ideas, we will always place the critical
point of $f$ at $0\in\TT$. 
In what follows, we will refer to such maps as critical circle maps.
\end{defn}

The {\it critical exponent} of a critical circle map $f$ is the degree of the critical point $0$. Since $f$  is an analytic homeomorphism of the circle,
the critical exponent must be an odd integer.

As a homeomorphism of the circle, a critical circle map $f$ has a well-defined rotation number which we denote by $\rho(f)$.
It is useful to represent $\rho(f)$ as a finite or infinite contined fraction with positive terms
\begin{equation}
\label{rotation-number}
\rho(f)=\cfrac{1}{r_0+\cfrac{1}{r_1+\cfrac{1}{r_2+\dotsb}}}
\end{equation}
Further on we will abbreviate this expression as $[r_0,r_1,r_2,\ldots]$
for typographical convenience.  
Note that the numbers $r_i$ are determined uniquely if and only if $\rho(f)$ is
irrational. In this case we shall say that $\rho(f)$ (or $f$ itself) is of type bounded by $B$
if $\sup r_i\leq B$; it is of a periodic type if the sequence $\{r_i\}$ is periodic.
For a map $f$ such that the continued fraction (\ref{rotation-number})
contains at least $m+1$ terms, we denote $\{p_m/q_m\}$ the best rational 
approximation of $\rho(f)$ given  as the truncated continued fraction
$p_m/q_m=[r_0,r_1,\ldots,r_{m-1}]$, and set $I_m\equiv[0,f^{q_m}(0)]$. 
Recall that an iterate $f^k(0)$ is called a {\it closest return} of the critical point if
the arc $[0,f^k(0)]$ contains no iterates $f^i(0)$ with $i<k$.
One verifies then that the iterates $\{f^{q_m}(0)\}$ are closest returns of $0$.



Let us introduce some notation:
suppose, $W$ is a complex Banach space whose elements are functions of the complex variable. Let us say that
the {\it real slice} of $W$ is the real Banach space $W^\RR$ consisting of the real-symmetric elements of $W$.
If $X$ is a Banach manifold modelled on $W$ with the atlas $\{\Psi_\gamma\}$
we shall say that $X$ is {\it real-symmetric} if $\Psi_{\gamma_1}\circ\Psi_{\gamma_2}^{-1}(U)\subset W^\RR$ for any pair of indices $\gamma_1$, $\gamma_2$ and any open set $U\subset W^\RR$ on which this composition is defined. The {\it real slice of $X$} is then defined as the real
Banach manifold $X^\RR\subset X$ given by $\Psi_\gamma^{-1}(W^\RR)$ in a local chart $\Psi_\gamma$.
An operator $A$ defined on a subset of $X$ is {\it real-symmetric} if $A(X^\RR)\subset X^\RR$.

Denote $\pi$ the natural projection $\CC\to\CC/\ZZ$. For an equatorial annulus $U\subset \CC/\ZZ$
let ${\aaa A}_U$ be the space of bounded analytic maps $\phi:U\to \CC/\ZZ$ continuous up to the boundary, such that
$\phi(\TT)$ is homotopic to $\TT$, equipped with the uniform metric.
We shall turn ${\aaa A}_U$ into a real-symmetric complex Banach manifold as follows.
Denote $\tl U$ the lift $\pi^{-1}(U)\subset \CC$. The space of functions $\tl\phi:\tl U\to\CC$ which are analytic,
continuous up to the boundary, and $1$-periodic, $\tl\phi(z+1)=\tl\phi(z)$, becomes a Banach space when endowed 
with the sup norm. Denote that space $\tl{\aaa A}_U$. For a function $\phi:U\to\CC/\ZZ$ denote
$\check\phi$ an arbitrarily chosen lift so that $\pi(\check\phi(\pi^{-1}(z)))=\phi$.
Observe that $\phi\in{\aaa A}_U$ if and only if $\tl\phi=\check\phi-\operatorname{Id}\in\tl{\aaa A}_U$.
We use the local homeomorphism between $\tl{\aaa A}_U$ and ${\aaa A}_U$ given by
$$\tl\phi\mapsto \pi\circ(\tl\phi+\operatorname{Id})\circ\pi^{-1}$$
to define the atlas on  ${\aaa A}_U$. The coordinate change transformations
are given by $\tl\phi(z)\mapsto \tl\phi(z+n)+m$ for $n,m\in\ZZ$, therefore with
 this atlas  ${\aaa A}_U$ is a real-symmetric complex Banach manifold.

Fix ${n}\in 2\NN+1$.
Let $f:U\to\CC/\ZZ$ be an analytic critical circle map with critical exponent ${n}$. By definition, there is a neighborhood
of the equator in which $0$ is the only critical point of $f$.
Let $\tl f:\tl U\to\CC$ be a lift of $f$. The Argument Principle implies that for $\eps>0$ small enough,
if $\tl g\in\tl{\aaa A}_U$ is real-analytic, and $||\tl f-\tl g||<\eps$, and $g$ has the same order of the critical point as $f$ at $0$, 
then $g=\pi\circ(\tl g+\operatorname{Id})\circ\pi^{-1}$ is a critical circle map as well.

Let $\eps(f)$ be the supremum of such $\eps$,  and set
$$\cu^{n}=\cup_f \{\pi\circ(\tl g+\operatorname{Id})\circ\pi^{-1}|\;\tl g\in\tl {\aaa A}_U,
\tl g'(0)=-1,\;\tl g''(0)=0,\ldots,\tl g^{({n}-1)}(0)=0\text{ and }||\tl g-\tl f||<\eps(f)\}.$$
As shown in \cite{Ya3}, the space $\cu^{n}$ is a submanifold of ${\aaa A}_U$.

We shall say that $f$ is a {\it critical cylinder map} if $f\in\cu^{n}$ for some $U\subset \CC/\ZZ$.
Let us denote by $\cur$ the real slice of $\cu$:          
$$\cur=\{f\in\cu^{n}\text{ with }\bar f(z)=f(\bar z)\}=\{f\in{\aaa A}_U,\text{ such that }f|_\TT\text{ is a
critical circle map}\}.$$
                              
\section{Renormalization}
\label{sec:renorm}

\subsection*{Definition of renormalization of critical circle maps using commuting pairs.}
The strong analogy with universality phenomena in  statistical physics and with the 
already discovered Feigenbaum-Collett-Tresser universality in unimodal maps,
led the authors of \cite{FKS} and \cite{ORSS} to explain the existence of the universal
constants by introducing a renormalization operator acting on critical commuting pairs.
For simplicity of the exposition, we give the definitions only in the analytic case.

\begin{defn}
A  {\it critical  commuting pair} $\zeta=(\eta,\xi)$ consists of two 
analytic  orientation preserving interval homeomorphisms 
$\eta:I_\eta\to \eta(I_\eta),\;
\xi:I_{\xi}\to \xi(I_\xi)$, where
\begin{itemize}
\item[(I)] {$I_\eta=[0,\xi(0)],\; I_\xi=[\eta(0),0]$;}
\item[(II)] {Both $\eta$ and $\xi$ have analytic continuations to interval
neighborhoods of their respective domains  which commute, 
$\eta\circ\xi=\xi\circ\eta$;}
\item[(III)] {$\xi\circ\eta(0)\in I_\eta$;}
\item[(IV)] {$\eta'(x)\ne 0\ne \xi'(y) $, for all $x\in I_\eta\setminus\{0\}$,
 and all $y\in I_\xi\setminus\{0\}$;}
\item[(V)] $\eta'(0)=\xi'(0)=0.$
\end{itemize}
\end{defn}

\noindent
The critical exponent ${n}$ of a critical commuting pair is the order of the critical points of $\eta$ and $\xi$ at the origin; again this 
number must be an odd integer.

Let $f$ be a critical circle mapping, whose rotation number $\rho$
has a continued fraction expansion (\ref{rotation-number}) with
at least $m+1$ terms, and let $p_m/q_m=[r_0,\ldots,r_{m-1}]$. The pair of iterates $f^{q_{m+1}}$
and $f^{q_m}$ restricted to the circle arcs $I_m$ and $I_{m+1}$
correspondingly can be viewed as a critical commuting pair  in
the following way.
Let $\bar f$ be the lift of $f$ to the real line satisfying $\bar f '(0)=0$,
and $0<\bar f (0)<1$. For each $m>0$ let $\bar I_m\subset \Bbb R$ 
denote the closed 
interval adjacent to zero which projects down to the interval $I_m$.
Let $\tau :\Bbb R\to \Bbb R$ denote the translation $x\mapsto x+1$.
Let $\eta :\bar I_m\to \Bbb R$, $\xi:\bar I_{m+1}\to \Bbb R$ be given by
$\eta\equiv \tau^{-p_{m+1}}\circ\bar f^{q_{m+1}}$,
$\xi\equiv \tau^{-p_m}\circ\bar f^{q_m}$. Then the pair of maps
$(\eta|\bar I_m,\xi|\bar I_{m+1})$ forms a critical commuting pair
corresponding to $(f^{q_{m+1}}|I_m,f^{q_m}|I_{m+1})$.
Henceforth we shall  simply denote this commuting pair by
\begin{equation}
\label{real1}
(f^{q_{m+1}}|I_m,f^{q_m}|I_{m+1}).
\end{equation}
This allows us to readily identify the dynamics of
the above commuting pair with that of the underlying circle map,
at the cost of a minor abuse of notation.

Following \cite{dFdM1}, we say that the {\it height} $\chi(\zeta)$
of a critical commuting pair $\zeta=(\eta,\xi)$ is equal to $r$,
if 
$$0\in [\eta^r(\xi(0)),\eta^{r+1}(\xi(0))].$$
 If no such $r$ exists,
we set $\chi(\zeta)=\infty$, in this case the map $\eta|I_\eta$ has a 
fixed point.  For a pair $\zeta$ with $\chi(\zeta)=r<\infty$ one verifies directly that the
mappings $\eta|[0,\eta^r(\xi(0))]$ and $\eta^r\circ\xi|I_\xi$
again form a commuting pair.
For a commuting pair $\zeta=(\eta,\xi)$ we will denote by 
$\wtl\zeta$ the pair $(\wtl\eta|\wtl{I_\eta},\wtl\xi|\wtl{I_\xi})$
where tilde  means rescaling by the linear factor $\lambda={1\over \xi(0)}$.

\begin{defn}
The {\it renormalization} of a critical commuting pair $\zeta=(\eta,
\xi)$ is the commuting pair
\begin{center}
${\cal{R}}\zeta=(
\widetilde{\eta^r\circ\xi}|
 \widetilde{I_{\xi}},\; \widetilde\eta|\widetilde{[0,\eta^r(\xi(0))]}).$
\end{center}
\end{defn}

\noindent
The non-rescaled pair $(\eta^r\circ\xi|I_\xi,\eta|[0,\eta^r(\xi(0))])$ will be referred to as the 
{\it pre-renormalization} $p{\cal R}\zeta$ of the commuting pair $\zeta=(\eta,\xi)$.

For a pair $\zeta$ we define its {\it rotation number} $\rho(\zeta)\in[0,1]$ to be equal to the 
continued fraction $[r_0,r_1,\ldots]$ where $r_i=\chi({\cal R}^i\zeta)$. 
In this definition $1/\infty$ is understood as $0$, hence a rotation number is rational
if and only if only finitely many renormalizations of $\zeta$ are defined;
if $\chi(\zeta)=\infty$, $\rho(\zeta)=0$.

\noindent
For $\rho=[r_0,r_1,\ldots]\in [0,1]$ let us set 
$$G(\rho)=[r_1,r_2,\ldots]=\left\{\frac{1}{\rho}\right\},$$
where $\{x\}$ denotes the fractional part of a real number $x$
($G$ is usually referred to as the {\it Gauss map}).
As follows from the definition,
$$\rho({\Ccal R}\zeta)=G(\rho(\zeta))$$ for a real commuting pair $\zeta$ with
$\rho(\zeta)\ne 0$.

The renormalization of the real commuting pair (\ref{real1}), associated
to some critical circle map $f$, is the rescaled pair
$(\wtl{{f}^{q_{m+2}}}|\wtl{{I}_{m+1}},\wtl{{f}^{q_{m+1}}}|\wtl{{I}_{m+2}})$.
Thus for a given critical circle map $f$ the renormalization operator
 recovers the (rescaled) sequence of the first return maps:
$$\{ (\wtl{f^{q_{i+1}}}|\wtl{I_i},\wtl{f^{q_{i}}}|\wtl{I_{i+1}})\}_{i=1}^{\infty}.$$

The space of critical commuting pairs modulo affine conjugacy  will be denoted by $\crit$; its subset consisting
of pairs $\zeta$ with $\chi(\zeta)=r$ with $r\in\NN\cup\{\infty\}$ will be denoted by ${\aaa S}_r$.
For each ${n}\in 2\NN+1$ the subspaces of pairs with critical exponent ${n}$ will be denoted by $\crit^{n}$ and ${\aaa S}_r^{n}$
respectively.  

Let $C^0([0,1])$ be the Banach space of continuous functions of the interval $[0,1]$ with the uniform norm. We identify a pair $\zeta\in \crit^{n}$  with
a point in $$C^0([0,1])\times C^0([0,1])\times \RR_{>0}$$ via
$$\zeta=(\eta,\xi)\longrightarrow \left( \frac{1}{\xi(0)}\eta(\xi(0)x),\frac{1}{\eta(0)}\xi(\eta(0)x),\frac{\eta(0)}{\xi(0)}\right) .$$
This induces a metric on $\crit^{n}$, which we will call the $C^0$-metric.

Renormalization is a transformation $${\cal R}:\crit^{n}\setminus{\aaa S}^{n}_\infty\to\crit^{n}.$$
It is not difficult to show  that this transformation is injective on each ${\aaa S}_r^n$:

\begin{prop}[\cite{Yamp-towers}]
\label{injective1}Let $r\neq \infty$. Then
the map ${\cal R}:{\aaa S}_r^{n}\to\crit^{n}$ is one-to-one.
\end{prop}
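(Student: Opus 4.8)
The plan is to show that a renormalization $\cR\zeta$ together with the value of the height $r$ determines $\zeta$ uniquely. Fix $r\neq\infty$ and suppose $\zeta=(\eta,\xi)$ and $\zeta'=(\eta',\xi')$ both lie in $\srk[r]^n$ and satisfy $\cR\zeta=\cR\zeta'$. Since elements of $\crit^n$ are taken modulo affine conjugacy, I may normalize both pairs so that $\eta(0)=\eta'(0)$ and $\xi(0)=\xi'(0)=1$ (one degree of affine freedom is used to fix $\xi(0)$, and the pair $(\eta(0),\xi(0))$ is then an invariant recorded by the third coordinate in the $C^0$-embedding). Under this normalization the rescaling factor $\lambda=1/\xi(0)=1$, so $\widetilde{\zeta}=\zeta$ and the pre-renormalization equals the renormalization: $\cR\zeta=(\eta^r\circ\xi|I_\xi,\;\eta|[0,\eta^r(\xi(0))])$, and likewise for $\zeta'$. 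Thus the hypothesis $\cR\zeta=\cR\zeta'$ gives directly
\begin{equation}
\label{eq:etaagree}
\eta|[0,\eta^r(\xi(0))]=\eta'|[0,(\eta')^r(\xi'(0))]\quad\text{and}\quad \eta^r\circ\xi|I_\xi=(\eta')^r\circ\xi'|I_{\xi'}.
\end{equation}

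Next I would recover $\eta$ in full from the first identity in \eqref{eq:etaagree}. A priori that identity only tells us $\eta=\eta'$ on the subinterval $[0,\eta^r(\xi(0))]\subset I_\eta=[0,\xi(0)]$. But $\eta$ and $\eta'$ are \emph{analytic} maps defined on interval neighborhoods of $I_\eta=I_{\eta'}=[0,1]$ (using the normalization $\xi(0)=1$), and two analytic functions agreeing on a nondegenerate interval agree on the connected component of their common domain; in particular $\eta=\eta'$ on all of $[0,1]$, and in fact on a common neighborhood. Now the second identity in \eqref{eq:etaagree}, combined with $\eta=\eta'$, reads $\eta^r\circ\xi=\eta^r\circ\xi'$ on $I_\xi=[\eta(0),0]=I_{\xi'}$. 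Since $\eta$ is an orientation-preserving homeomorphism of its domain onto its image, so is $\eta^r$ on the appropriate interval (condition (IV) guarantees $\eta'\neq 0$ off the origin, and the relevant iterates stay in the region where $\eta$ is injective), hence $\eta^r$ is invertible there and we may cancel it on the left to conclude $\xi=\xi'$ on $I_\xi$. Together with $\eta=\eta'$ this gives $\zeta=\zeta'$ as points of $\crit^n$, proving injectivity.

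The step that needs the most care — and which I expect to be the main obstacle — is the cancellation of $\eta^r$ on the left in the second identity: one must verify that $\eta^r$ really is injective on the interval swept out by the orbit of $\xi(I_\xi)$ under $\eta$, so that left composition by $\eta^r$ is a genuinely invertible operation rather than merely a many-to-one map. This is where conditions (I)–(IV) of the definition of a critical commuting pair enter essentially: the commuting relation (II) and the condition (III) that $\xi\circ\eta(0)\in I_\eta$ confine the orbit $\{\xi(0),\eta(\xi(0)),\dots,\eta^r(\xi(0))\}$ to $I_\eta=[0,\xi(0)]$, on which $\eta$ is an orientation-preserving homeomorphism onto its image by (IV) (the critical point $0$ is an endpoint, so monotonicity is not violated), and one checks inductively that $\eta^j$ remains injective on $[0,\eta^{r-j}(\xi(0))]$ for $0\le j\le r$. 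I would also double-check that the normalization absorbing the affine conjugacy is consistent with the embedding into $C^0([0,1])\times C^0([0,1])\times\RR_{>0}$ described just above, so that "$\cR\zeta=\cR\zeta'$ as points of $\crit^n$" legitimately yields the pointwise identities in \eqref{eq:etaagree} for suitably chosen representatives. The remaining analyticity and monotonicity arguments are routine.
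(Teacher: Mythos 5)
Your overall strategy --- recover $\eta$ from the second component of $\cR\zeta$ by analytic continuation, then recover $\xi$ by inverting $\eta^{r}$ on the interval swept out by $\xi(I_\xi)$ --- is the natural one; the paper itself only quotes the proposition from \cite{Yamp-towers} without proof, and this is the standard argument. The two substantive steps are handled correctly: analyticity on interval neighborhoods upgrades agreement on $[0,\eta^r(\xi(0))]$ to agreement on all of $I_\eta$, and $\eta^r$ is indeed injective on $\xi(I_\xi)=[\eta(\xi(0)),\xi(0)]$ because its images $[\eta^{j+1}(\xi(0)),\eta^j(\xi(0))]$, $0\le j\le r-1$, stay inside $I_\eta$, where $\eta$ is a homeomorphism onto its image.

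The genuine gap is the opening ``normalization.'' Each pair carries exactly one affine degree of freedom (a linear rescaling fixing the critical point $0$), so you may set $\xi(0)=\xi'(0)=1$, but you may \emph{not} additionally impose $\eta(0)=\eta'(0)$: once $\xi(0)$ is fixed, $\eta(0)$ (equivalently the ratio $\eta(0)/\xi(0)$) is an affine invariant of $\zeta$, and the hypothesis is about $\cR\zeta$, not $\zeta$ --- the third coordinate of the $C^0$-embedding of $\cR\zeta$ is $\eta^r(\xi(0))/\eta(0)$ (up to the rescaling convention), not $\eta(0)/\xi(0)$. That this invariant of $\zeta$ is determined by $\cR\zeta$ is part of what the proposition asserts, so it cannot be assumed at the outset. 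Concretely, equality $\cR\zeta=\cR\zeta'$ in $\crit^{n}$ (pairs modulo affine conjugacy) only provides a linear map $L(x)=\mu x$ conjugating the pre-renormalizations; your two displayed identities amount to silently taking $L=\operatorname{Id}$ (and the rescaling factor in the definition of $\cR$ is that of the pre-renormalized pair, not $1/\xi(0)$ of the original pair, though modulo affine conjugacy this is immaterial). The fix is routine and uses exactly your two steps with $L$ carried along: analytic continuation of $\eta=L^{-1}\circ\eta'\circ L$ from $[0,\eta^r(\xi(0))]$ to $I_\eta$, then cancellation of $\eta^r$ yields $\xi=L^{-1}\circ\xi'\circ L$ on $I_\xi$; evaluating at $0$ shows $L(I_\eta)=I_{\eta'}$ and $L(I_\xi)=I_{\xi'}$, so $L$ conjugates $\zeta$ to $\zeta'$ and the two pairs coincide in $\crit^{n}$. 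Alternatively, normalize identically the \emph{renormalized} pairs rather than the original ones; then the conjugacy between them is forced to be the identity and your pointwise identities become legitimate.
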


\subsection*{Renormalization of maps of the cylinder.}

The cylinder renormalization operator $\cren$ was introduced in \cite{Ya3} as a more natural
way to renormalize analytic critical circle maps. It has two crucial advantages: the operator
acts on maps themselves, rather than their conjugacy classes; and extends to an analytic 
operator in the  Banach manifold $\cu^{n}$ of analytic critical circle 
maps defined in some equatorial neighborhood $U\subset \CC/\ZZ$. We briefly recall the definition
of $\cren$ below, the reader is directed to \cite{Ya3,Ya4} for the detailed exposition.

First note that in \cite{Ya3} we 
constructed a closed equivalence relation denoted $\ceq$ between analytic commuting pairs 
with the property that if $\zeta_1\ceq\zeta_2$ and $\chi(\zeta_1)=\chi(\zeta_2)$, then the analytic extensions of the commuting pairs
are conjugate in a specific neighborhood of the interval of definition by a conformal change of 
coordinates. In particular, $\rho(\zeta_1)=\rho(\zeta_2)$.

Furthermore, as we have shown, there exist an open set $\Omega\subset \crit^n$ and $N\in\NN$
such that for every $\zeta\in\crit^n$ with $\rho(\zeta)\notin\QQ$ the renormalizations $\cR^k\zeta\in\Omega$ for all $k$ larger than some $k_0=k_0(\zeta)$ such that the following holds.
For any $N$-times renormalizable $\zeta\in\Omega$ we have
$\cR^N\zeta\in\Omega$, and if $\zeta_1$, $\zeta_2$ are two $N$-times renormalizable pairs in $\Omega$ with $\zeta_1\ceq\zeta_2$, then
$$\cR^N\zeta_1\ceq\cR^N\zeta_2.$$ This last property implies, in particular,
that periodic orbits of $\cR^N$ project to the quotient space. 

The cylinder renormalization operator defined in \cite{Ya3} is 
a real-symmetric analytic operator $\cren$ from an open subset ${\aaa R}$ of $\cu^{n}$ to $\cu^{n}$
(thus $\cren$ maps an open subset of $$\mfld^{n}\equiv\cur$$ into $\mfld^{n}$),  such that the following holds.
There exists $N\in\NN$ such that the operator $\cren$ changes the rotation number by shifting
its continued fraction expansion $N$ terms to the left.

For every $r\ne\infty$ there is a canonical homeomorphism onto the image
$${{\aaa S}^{n}_r}\cap\Omega/\ceq\overset{\iota_r}{\longrightarrow} \mfld^{n}$$ 
such that $\rho(\iota_r(\zeta))=\rho(\cR(\zeta))$, and $$\iota_r\circ\cR^N\equiv\cren\circ\iota_q$$
whenever both sides are defined.
 
Every map $f$ in ${\aaa R}$ posesses a domain $C_f$ called a fundamental crescent, which is 
bounded by two simple curves $l$ and $f^{q_N}(l)$ which meet at two endpoints $p_1$, $p_2$,
 which are
repelling fixed points of the iterate $f^{q_N}$. Moreover, the quotient of 
$\overline{C_f\cup f^{q_N}(C_f)}\setminus\{p_1,p_2\}$ by the iterate $f^{q_N}$ is conformally isomorphic to $\CC/\ZZ$. The first return map $R_f$ of $C_f$ under this isomorphism becomes $\cren f$.

\section{Statement of  the results}
\label{sec:results}

In this note we extend the general renormalization hyperbolicity results \cite{Yamp-towers,Ya4} to the case of unicritical circle maps
with an arbitrary critical exponent ${n}\in 2\NN+1$.
Before formulating our results let us make some further notation.
Let $\Sigma$ be the space of bi-infinite sequence of natural numbers, and 
denote by $\sigma:\Sigma\to\Sigma$ the shift on this space:
$$\sigma:(r_i)^\infty_{-\infty}\mapsto(r_{i+1})^\infty_{-\infty}.$$
Let us also complement the natural numbers with the symbol $\infty$ with the conventions
$\infty+x=\infty$, 
$1/\infty=0$; and denote by $\bar \Sigma$ the space $({\Bbb N}\cup\{\infty\})^{\Bbb Z}$,
and by $\bar \Sigma^+$ the space $({\Bbb N}\cup\{\infty\})^{\NN}$.
Our first theorem is a generalization of \cite{Yamp-towers} to the case of an arbitrary ${n}\in 2\NN+1$:
\begin{thm}[Renormalization horseshoe]
\label{existence of attractor}
Let ${n}\in 2\NN+1$.
There exists an $\cal R$-invariant  set ${\cal I}\subset\crit^{n}$ consisting of critical commuting pairs with critical exponent ${n}$ with irrational 
rotation numbers which has the following properties.
The action of $\cal R$ on $\cal I$ is topologically
conjugate to the two-sided shift $\sigma:\Sigma\to\Sigma$:
$$i\circ{\cal R}\circ i^{-1}=\sigma$$
and if $\zeta=i^{-1}(\ldots,r_{-k},\ldots,r_{-1},r_0,r_1,\ldots,r_k,\ldots)$ then
$\rho(\zeta)=[r_0,r_1,\ldots,r_k,\ldots]$. 
The set $\cal I$ is pre-compact, its closure ${\cal A}\subset \crit^{n}$ is the
horseshoe  attractor for the renormalization. That is, for any $\zeta\in \crit^{n}$ 
with irrational rotation number we have
$${\cal R}^m\zeta\to \cal A.$$
Moreover, for any two pairs $\zeta$, $\zeta'\in\crit^{n}$ with $\rho(\zeta)=\rho(\zeta')$
we have 
$$\dist({\cal R}^m\zeta,{\cal R}^m\zeta')\to 0.$$
\end{thm}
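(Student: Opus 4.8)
The plan is to deduce Theorem~\ref{existence of attractor} from the quasiconformal rigidity statement of Theorem~\ref{thm:main} (non-existence of invariant Beltrami differentials for holomorphic commuting pairs) together with the complex \emph{a priori} bounds of \cite{Ya1}, following the architecture of \cite{Yamp-towers} and \cite{Ya4} verbatim, since (as noted in the introduction) every ingredient there is insensitive to the value of the odd critical exponent $n$ once rigidity is available. Concretely, I would first invoke the complex \emph{a priori} bounds: for any $\zeta\in\crit^n$ with irrational rotation number, all sufficiently high renormalizations $\cR^m\zeta$ extend to holomorphic commuting pairs with a definite modulus of analyticity and bounded geometry, and they lie in the open set $\Omega\subset\crit^n$ discussed in \S\ref{sec:renorm}. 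This gives pre-compactness of the family $\{\cR^m\zeta\}$ in the $C^0$-metric, and any limit point is again a holomorphic commuting pair with the same bounds. Define $\cA$ to be the closure of the set of all such limit points over all irrational-rotation-number $\zeta$, and $\cI$ its subset of pairs whose full bi-infinite sequence of heights $r_i=\chi(\cR^i\zeta)$ consists of \emph{finite} natural numbers; irrationality of $\rho$ on $\cI$ is automatic from bi-infiniteness.

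Next I would establish the conjugacy with the shift. The height function $\chi$ and the fact that $\rho(\cR\zeta)=G(\rho(\zeta))$ already show that $\cR$ acts on $\cI$ by shifting the one-sided continued fraction forward, and bi-infinite renormalizability lets one read off a bi-infinite sequence $(r_i)_{i\in\ZZ}$; this defines the map $i:\cI\to\Sigma$ with $i\circ\cR\circ i^{-1}=\sigma$ and $\rho(\zeta)=[r_0,r_1,\ldots]$. Surjectivity of $i$ and continuity of $i^{-1}$ — i.e. that every prescribed bi-infinite sequence of bounded, or even unbounded, heights is realized by exactly one pair in $\cI$ — is where the real work lies: one shows that for a fixed sequence the nested ``cylinders'' (sets of pairs in $\Omega$ with prescribed first $k$ forward and backward heights) have diameters shrinking to zero. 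This shrinking is precisely a consequence of the rigidity of Theorem~\ref{thm:main}: two holomorphic commuting pairs with the same finite string of heights are conformally conjugate on a definite neighborhood up to a quasiconformal correction whose dilatation is governed by an invariant Beltrami differential, and Theorem~\ref{thm:main} forces that differential to be trivial, so the correction is conformal; iterating forward and backward and using the contraction built into \emph{a priori} bounds collapses the cylinders. This same mechanism yields the last two assertions: if $\rho(\zeta)=\rho(\zeta')$ then $\cR^m\zeta$ and $\cR^m\zeta'$ lie in the same cylinder of depth $\to\infty$, hence $\dist(\cR^m\zeta,\cR^m\zeta')\to0$; and convergence $\cR^m\zeta\to\cA$ follows because $\cR^m\zeta$ eventually enters $\Omega$ with the right bounds and its forward/backward height data approximates that of a point of $\cI$.

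I expect the main obstacle to be exactly the ``cylinders shrink to a point'' step — that is, upgrading the qualitative rigidity of Theorem~\ref{thm:main} to a quantitative contraction along renormalization. The subtle points are: (i) controlling the loss of domain of analyticity under the conformal conjugacies coming from the $\ceq$-relation, so that after $N$-fold renormalization (the shift amount for $\cren$) one still has a definite modulus to work with — this is handled by the bounds of \cite{Ya1} but must be threaded through carefully for general $n$; (ii) the passage between the pair picture $\crit^n$ and the cylinder picture $\mfld^n$ via the homeomorphisms $\iota_r$, making sure the quotient $\ceq$ does not destroy injectivity (here Proposition~\ref{injective1} is essential); and (iii) at sequences containing $\infty$, the parabolic/degenerate behavior of $\eta$ having a fixed point, which requires the pre-compactness argument to be run in a topology that tolerates these limits. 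Once these are in place the conjugacy, the attractor property, and the synchronization estimate all follow by the standard pull-back argument, identical to \cite{Yamp-towers,Ya4}.
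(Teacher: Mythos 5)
Your proposal takes essentially the same route as the paper: the paper offers no independent argument for Theorem~\ref{existence of attractor}, but simply observes that the $n=3$ proofs of \cite{Yamp-towers,Ya4} carry over verbatim to arbitrary odd $n$ once the non-existence of invariant Beltrami differentials (Theorem~\ref{thm:main}) is supplied, and then devotes the paper to proving that rigidity statement. Your reduction to Theorem~\ref{thm:main} plus the complex \emph{a priori} bounds, with the remaining architecture imported from \cite{Yamp-towers,Ya4}, matches this exactly; the additional sketch of the cylinder-shrinking mechanism is a reasonable outline of the cited machinery rather than a divergence from it.
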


Furthermore, 
\begin{thm}[{\bf Hyperbolicity of renormalization}]
\label{renormalization hyperbolicity}
For each  $\zeta\in{\aaa S}_r^n\cap {\aaa R}$ we denote $\hat i(\zeta)\equiv i_r(\zeta)\in\mfld^{n}.$
The set 
$$\hat\cI\equiv \hat i(\cI)\subset\mfld^{n}$$
is uniformly hyperbolic for the analytic operator $\cren$, with one-dimensional
unstable direction.
\end{thm}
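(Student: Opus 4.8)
\textbf{Proof plan for Theorem~\ref{renormalization hyperbolicity}.}
The strategy is to transfer the hyperbolicity of the cylinder renormalization operator $\cren$ acting near a single periodic orbit (the case $n=3$ of which is the main theorem of \cite{Ya3,Ya4}) to the whole horseshoe $\hat\cI$, exactly as is done in \cite{Yamp-towers} for the cubic case, and to check that every ingredient survives the passage to an arbitrary odd exponent $n$. The one non-routine input is the absence of invariant Beltrami differentials for holomorphic commuting pairs with exponent $n$, which is supplied by Theorem~\ref{thm:main}; everything else in \cite{Ya4,Yamp-towers} is insensitive to the value of $n$. So the heart of the plan is: (i) establish that $\cren$ is an analytic, compact (or at least uniformly $\aaa Asymptotically compact$) operator on the complex Banach manifold $\cu^{n}$, so that a hyperbolic set carries a well-defined splitting into a $1$-dimensional unstable bundle and a codimension-$1$ stable bundle; (ii) show the derivative $D\cren$ along $\hat\cI$ has a uniformly dominated splitting, using the complex \textit{a priori} bounds \cite{Ya1} to get pre-compactness of $\hat\cI$ and of the relevant families of renormalizations; (iii) conclude uniform hyperbolicity by the same cone-field / dominated-splitting argument used in \cite{Yamp-towers}.

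Concretely, I would first recall from \S\ref{sec:renorm} that $\cren$ is a real-symmetric analytic operator on an open set $\aaa R\subset\cu^{n}$, and that for each finite $r$ the map $i_r$ identifies $({\aaa S}^{n}_r\cap\Omega)/\ceq$ with an open piece of $\mfld^{n}$ intertwining $\cR^N$ with $\cren$. Hence $\hat\cI=\hat i(\cI)$ is a compact $\cren$-invariant subset of $\mfld^{n}$ on which $\cren$ is conjugate, via Theorem~\ref{existence of attractor}, to the two-sided shift $\sigma$ on $\Sigma$ — in particular $\cren|_{\hat\cI}$ has a hyperbolic (expansive, topologically transitive) combinatorial model. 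The second step is to produce the invariant splitting. For a point $\zeta$ lying on a periodic orbit of $\sigma$ of period $p$, $\hat i(\zeta)$ is a periodic point of $\cren$; by the rigidity argument of \cite{Ya4} — which rests on holomorphic commuting pairs, the pullback argument, and non-existence of invariant Beltrami differentials, now available for all odd $n$ by Theorem~\ref{thm:main} — the linearization $D(\cren^{p})$ at $\hat i(\zeta)$ is a compact operator with exactly one eigenvalue $>1$ in modulus and the rest of the spectrum strictly inside the unit disk, so the periodic points are hyperbolic with $1$-dimensional unstable direction. The third step upgrades this from the dense set of periodic points to all of $\hat\cI$: using the complex bounds one shows that over $\hat\cI$ the cocycle $D\cren$ admits continuous invariant cone fields (an unstable cone and a stable cone), invariant under $D\cren$ and its formal inverse along orbits, with uniform expansion/contraction rates; compactness of $\hat\cI$ together with the shift conjugacy then gives the uniform constants, and the cone fields integrate to the continuous splitting $T_{\hat\cI}\mfld^{n}=E^u\oplus E^s$ with $\dim E^u=1$.

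The main obstacle — and the reason this paper exists — is precisely the non-existence of invariant line fields / Beltrami differentials for the holomorphic commuting pairs associated with renormalizations of exponent-$n$ maps, which underlies the hyperbolicity of the periodic points in step two: once Theorem~\ref{thm:main} is in hand, this input goes through verbatim as in \cite{Ya4}. A secondary, but genuinely routine, point is to verify that the compactness and analyticity properties of $\cren$ on $\cu^{n}$ established in \cite{Ya3} for $n=3$ hold \emph{mutatis mutandis} for arbitrary odd $n$ — the construction of fundamental crescents $C_f$, the conformal uniformization of $\overline{C_f\cup f^{q_N}(C_f)}\setminus\{p_1,p_2\}$ by $\CC/\ZZ$, and the resulting analyticity of $f\mapsto R_f$ depend only on the complex bounds \cite{Ya1} and not on the value of the critical exponent. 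Assembling these, the dominated-splitting/cone-field machinery of \cite{Yamp-towers} applies without change and yields uniform hyperbolicity of $\hat\cI$ with one-dimensional unstable direction, as asserted.
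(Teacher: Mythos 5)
Your proposal is correct and matches the paper's treatment: the paper proves Theorem~\ref{renormalization hyperbolicity} simply by observing that the $n=3$ arguments of \cite{Ya3,Ya4,Yamp-towers} carry over verbatim to arbitrary odd $n$ once the non-existence of invariant Beltrami differentials for holomorphic commuting pairs (Theorem~\ref{thm:main}) is supplied, which is exactly the single non-routine input you identify. Your additional sketch of the $n=3$ machinery (hyperbolicity of periodic points via rigidity, then uniformity over the horseshoe using complex bounds and compactness) is a reasonable expansion of what the cited works do, and nothing in it conflicts with the paper.
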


\noindent
The proofs of the above theorems for an arbitrary $n\in 2\NN+1$ are identical to the case $n=3$ except for a single missing piece, whose proof in \cite{dF2} uses the value $n=3$ in an essential way. 
We proceed to make the relevant definitions and formulate our main result, which completes the proofs of Theorems \ref{existence of attractor} and \ref{renormalization hyperbolicity} in the case of a general odd $n$, in the next section.

\section{Holomorphic commuting pairs and the statement of the main result}
\label{holomorphic pairs}
De~Faria \cite{dF2} introduced holomorphic commuting pairs to apply 
Sullivan's Riemann surface laminations technique to renormalization of 
critical circle maps. They are suitably defined holomorphic extensions 
of critical commuting pairs. Below we generalize the definition of \cite{dF2} to the case of an arbitrary odd critical exponent ${n}\in 2\NN+1$ in a straightforward fashion.
A critical commuting pair $\zeta=(\eta|_{I_\eta},\xi|_{I_\xi})\in\crit^{n}$  extends to a {\it
holomorphic commuting
pair} $\cal H$ if there exist four simply-connected $\RR$-symmetric quasidisks $\Delta$, $D$, $U$, $V$ such that
\begin{itemize}
\item  $\bar D,\; \bar U,\; \bar V\subset \Delta$,
 $\bar U\cap \bar V=\{ 0\}$;
 $U\setminus D$,  $V\setminus D$, $D\setminus U$, and $D\setminus V$ 
 are nonempty 
connected and simply-connected regions;
\item denoting $I_U\equiv U\cap\RR$, $I_V\equiv V\cap \RR$, $I_D\equiv D\cap\RR$ we have $I_U\supset I_\eta$, $I_V\supset I_{\xi}$;
\item the sets $U\cap\HH$, $V\cap \HH$, and $D\cap\HH$ are Jordan domains;
\item the maps $\eta$ and $\xi$ extend analytically to the domains $U$ and $V$ respectively, so that
 $\eta:U\to (\Delta\setminus \RR)\cup\eta(I_U)$ and
 $\xi:V\to(\Delta\setminus \RR)\cup\xi(I_V)$ are onto and
univalent;
\item $\nu\equiv \eta\circ\xi$ analytically contniues to an ${n}$-fold branch covering $\nu:D\to (\Delta\setminus \RR)\cup{\nu(I_D)}$ 
 with a unique critical point at zero.
\end{itemize}

\noindent
We shall call $\zeta$ the {\it commuting pair underlying $\cH$}, and write $\zeta\equiv \zeta_\cH$.
The domain $D\cup U\cup V$ of a  holomorphic commuting pair $\cH$
will be  denoted $\Omega$ or $\Omega_{\cal H}$, the range $\Delta$ will be sometimes denoted  $\Delta_\cH$.

We can associate to $\cH$ a piecewise-defined holomorphic map $S_\cH:\Omega\to\Delta$:
$$ S_{\cH}(z)=\left\{ 
\begin{array}{l}
\eta(z),\text{ if }z\in U;\\
\xi(z),\text{ if }z\in V;\\
\nu(z),\text{ if }z\in \Omega\setminus(U\cup V).\\
\end{array}
\right.,$$
which de~Faria \cite{dF2} called the {\it shadow} of $\cH$. Note that $S_\cH$ and $\cH$ share the same orbits as sets. The {\it filled Julia set } $K(\cH)$ is the set of points in $\Omega$ which do not escape $\Omega$ under the iteration of $S_{\cH}$; the {\it Julia set} $J(\cH)=\partial K(\cH)$. 

Denote $\hol^{n}$ the space of holomorphic commuting pairs with critical exponent ${n}$. It can be naturally endowed with Carath{\'e}odory topology (cf. \cite{Yamp-towers}).

It is easy to see directly from the definition (cf. \cite{dF2}) that:
\begin{prop}Let $\zeta$ be a commuting pair with $\chi(\zeta)<\infty$.
Suppose $\zeta$ is a restriction of a holomorphic commuting pair $\cH$, that is
$\zeta=\zeta_{\cH}$. Then there exists a holomorphic commuting pair $\cG$ with range $\Delta_\cH$,
such that $\zeta_\cG=\cR\zeta$.
\end{prop}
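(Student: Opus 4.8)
The statement asserts that renormalizability of a holomorphic commuting pair is inherited under renormalization, with the \emph{same range} $\Delta_\cH$. The plan is to realize the pre-renormalization $p\cR\zeta=(\eta^r\circ\xi|_{I_\xi},\eta|_{[0,\eta^r(\xi(0))]})$ as a holomorphic commuting pair by restricting the analytic extensions already provided by $\cH$, and then to argue that the four required quasidisks $(\Delta',D',U',V')$ can be chosen inside the data of $\cH$ so that $\Delta'=\Delta_\cH$ (note the statement does \emph{not} ask for the rescaled renormalization $\cR\zeta$ to extend with range $\Delta_\cH$ — rescaling is an affine map and can be applied afterward, or one simply reads $\cG$ as a holomorphic commuting pair whose underlying pair is affinely conjugate to $\cR\zeta$; in any case the normalization of the range is what is being emphasized).

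First I would set $V'\equiv V$, so $\xi$ already extends univalently to $V'$ onto $(\Delta_\cH\setminus\RR)\cup\xi(I_V)$. For the new $\eta$-domain $U'$: since $\chi(\zeta)=r<\infty$, the orbit $\xi(0),\eta(\xi(0)),\dots,\eta^r(\xi(0))$ straddles $0$, and each $\eta^j$ for $1\le j\le r$ maps a neighborhood of the relevant sub-interval univalently by the last-but-one bullet in the definition of holomorphic commuting pair ($\eta:U\to(\Delta_\cH\setminus\RR)\cup\eta(I_U)$ is univalent). So I would let $U'$ be the connected component containing $[0,\eta^r(\xi(0))]$ of the preimage $(\eta^r)^{-1}(\Delta_\cH\setminus\RR)\cup\dots$, built by pulling back $\Delta_\cH$ successively $r$ times along the branches of $\eta^{-1}$; this is a simply-connected $\RR$-symmetric quasidisk contained in $\Delta_\cH$, it contains $I_{\tilde\eta}=[0,\eta^r(\xi(0))]$, and $\tilde\eta\equiv\eta|_{U'}$ maps it univalently onto $(\Delta_\cH\setminus\RR)\cup\tilde\eta(I_{U'})$. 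The new $D'$ is then the domain on which $\tilde\nu\equiv\tilde\eta\circ\tilde\xi=\eta^{r}\circ\xi$ is a proper degree-$n$ branched cover onto $(\Delta_\cH\setminus\RR)\cup\tilde\nu(I_{D'})$ with its only critical point at $0$; since $\eta^r$ contributes only univalent branches and all the degree comes from $\xi$ (equivalently from the original $\nu=\eta\circ\xi$ restricted appropriately), $D'$ is obtained by pulling the new range back through $\eta^{r-1}$ along univalent branches starting from $D$ (or its relevant sub-piece), again a quasidisk in $\Delta_\cH$. Finally $\Delta'$: one takes $\Delta'=\Delta_\cH$ and checks the inclusions $\bar D',\bar U',\bar V'\subset\Delta_\cH$ with $\bar U'\cap\bar V'=\{0\}$, the connectivity/simple-connectivity of $U'\setminus D'$, $V'\setminus D'$, $D'\setminus U'$, $D'\setminus V'$, and the Jordan-domain property of the upper-half-plane pieces. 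All of these are consequences of the corresponding properties of $\cH$ together with the fact that taking preimages under the univalent maps $\eta^j$ preserves quasidisks, $\RR$-symmetry, and the relevant combinatorial incidence pattern; the commuting relation $\eta\circ\xi=\xi\circ\eta$ on analytic continuations is exactly what guarantees $p\cR\zeta$ is again a commuting pair, and it passes to the holomorphic extensions for free since those extensions are unique analytic continuations.

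The main obstacle I expect is \emph{geometric}, not formal: verifying that the successively pulled-back quasidisks $U'$ and $D'$ still fit inside the \emph{original} range $\Delta_\cH$ with the precise mutual-position requirements (in particular $\bar U'\cap\bar V'=\{0\}$ and the four set-differences remaining nonempty, connected, and simply connected). A priori a branch of $(\eta^r)^{-1}(\Delta_\cH)$ could be large or could wrap around in a way that violates $\bar U'\cap\bar V'=\{0\}$ or spills outside $\Delta_\cH$. This is controlled by the fact — implicit in the construction of holomorphic commuting pairs in \cite{dF2} and used throughout — that $\eta$ maps $U$ \emph{onto} all of $(\Delta_\cH\setminus\RR)\cup\eta(I_U)$ with $\bar U\subset\Delta_\cH$, so pulling back the range through $\eta$ lands strictly inside $U\subset\Delta_\cH$; iterating, each further pull-back stays inside the previous $\eta$-domain and hence inside $\Delta_\cH$, and shrinks toward the relevant interval, which forces the intersection with $V$ down to the single point $0$. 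The connectivity of the set-differences follows because $\eta^j$ is a homeomorphism onto its image on each pulled-back piece, transporting the (known) connected configuration for $\cH$. Once these inclusions are in hand, the proposition follows by definition-chasing. I would present the argument in this order: (1) define $U'$, $V'$, $D'$ by iterated univalent pull-backs of $\Delta_\cH$; (2) check $\tilde\eta$, $\tilde\xi$ are onto and univalent and $\tilde\nu$ is a degree-$n$ branched cover with critical point only at $0$; (3) verify the quasidisk, $\RR$-symmetry, inclusion, and incidence conditions, isolating the claim $\bar U'\cap\bar V'=\{0\}$ as the key point; (4) conclude that $\cG\equiv(\tilde\nu|_{D'},\tilde\eta|_{U'},U',V',\Delta_\cH)$ — after the affine rescaling by $\lambda=1/\xi(0)$ if one wants $\zeta_\cG$ on the nose equal to $\cR\zeta$ — is the desired holomorphic commuting pair.
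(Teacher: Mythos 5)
The paper offers no argument for this proposition beyond ``directly from the definition (cf.~\cite{dF2})'', and your general strategy --- build the domains of $\cG$ by pulling back the slit range $(\Delta_\cH\setminus\RR)\cup(\cdot)$ under the univalent inverse branches supplied by $\cH$, so that the range stays $\Delta_\cH$ --- is indeed the intended definition-chasing proof. The problem is in the execution: you misassign the components of $\cR\zeta$ to the domains, and this is not merely a labeling issue. The renormalized pair is $(\eta^r\circ\xi|_{I_\xi},\,\eta|_{[0,\eta^r(\xi(0))]})$; neither component is $\xi$, so taking $V'=V$ with the map $\xi$ does not produce a piece of $\cR\zeta$, and your $U'$, which you equip with the single iterate $\tilde\eta=\eta|_{U'}$ and which contains $[0,\eta^r(\xi(0))]$, can at best serve as the domain of the \emph{second} component. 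The domain that actually requires construction --- a quasidisk around $I_\xi$ on which $\eta^r\circ\xi$ extends univalently \emph{onto} $(\Delta_\cH\setminus\RR)\cup(\cdot)$ --- never appears: it must be obtained as the pullback $\xi^{-1}\bigl(\eta^{-r}(\,\cdot\,)\bigr)$ of the slit range, hence sits inside $V$, and your outline never uses a $\xi^{-1}$ pullback at all. Moreover, if $U'$ is the $r$-fold pullback of $\Delta_\cH$ under $\eta^{-1}$, then $\eta|_{U'}$ maps it onto the $(r-1)$-fold pullback, not onto the slit $\Delta_\cH$, so the ``onto'' clause of the definition fails as you have set things up.

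Second, the composite map is computed incorrectly: for $\cG$ one needs $\nu_\cG=\eta_\cG\circ\xi_\cG=(\eta^r\circ\xi)\circ\eta=\eta^r\circ\nu$ by commutativity, not $\eta^r\circ\xi$; and its $n$-fold branching does not ``come from $\xi$'' --- $\xi$ is univalent on $V$, since $0\in\partial V$ --- it comes from the original $n$-fold cover $\nu:D\to(\Delta_\cH\setminus\RR)\cup\nu(I_D)$. The correct $D_\cG$ is the component of $\nu^{-1}\bigl(\eta^{-r}((\Delta_\cH\setminus\RR)\cup(\cdot))\bigr)$ containing the appropriate interval: pulling the univalent branches of $\eta^{-r}$ back through the degree-$n$ map $\nu|_D$ gives the required $n$-fold branched cover with unique critical point at $0$, and automatically $D_\cG\subset D$, $U_\cG\subset V$, while for the second component one may simply take $V_\cG=U$ with the map $\eta$, which is already univalent onto the slit range. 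With that arrangement the containments in $\Delta_\cH$, the $\RR$-symmetry and quasidisk properties, and $\overline{U_\cG}\cap\overline{V_\cG}=\{0\}$ (the new domains lie in $V$ and $U$ respectively, which meet only at $0$) follow along the lines you indicate, and the affine rescaling remark you make is fine. As written, however, your construction does not yield a holomorphic pair whose underlying commuting pair is $\cR\zeta$, so the key step is still missing.
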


A standard pull-back argument (cf. \cite{dF2,Ya1}) gives:
\begin{thm}\label{qc conjugacy}
Suppose $\cH_1$ and $\cH_2$ are two holomorphic commuting pairs with the same critical exponent ${n}$ and the same irrational rotation number
$$\rho(\zeta_{\cH_1})=\rho(\zeta_{\cH_2})\notin \QQ.$$
Then $\cH_1$ and $\cH_2$ are quasiconformally conjugate.
\end{thm}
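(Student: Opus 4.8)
The plan is to construct the conjugacy by a standard pull-back (Sullivan--Douady--Hubbard) argument, exploiting the combinatorial identity of the two pairs. First I would use the fact that $\rho(\zeta_{\cH_1}) = \rho(\zeta_{\cH_2})$ is irrational to obtain a \emph{combinatorial equivalence}: the two underlying critical commuting pairs $\zeta_{\cH_1}$, $\zeta_{\cH_2}$ have the same sequence of heights $r_i = \chi(\cR^i\zeta)$, hence their orbits on the interval have the same order structure. By the classical rigidity for circle homeomorphisms with the same irrational rotation number (Poincar\'e/Denjoy--type combinatorics), there is an orientation-preserving homeomorphism $h_0$ between the intervals of definition which conjugates $S_{\cH_1}$ to $S_{\cH_2}$ on the post-critical set and respects the marked points (the critical point $0$, the endpoints $\eta(0)$, $\xi(0)$, etc.).

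Next I would promote $h_0$ to a quasiconformal homeomorphism of the ranges. The domains $\Delta_{\cH_j}$, $U_j$, $V_j$, $D_j$ are $\RR$-symmetric quasidisks with Jordan intersections with $\HH$, so one can build an initial quasiconformal map $H_0 \colon \Delta_{\cH_1} \to \Delta_{\cH_2}$ which agrees with $h_0$ on the real trace, sends $U_1, V_1, D_1$ to $U_2, V_2, D_2$, and is compatible with the boundary identifications of the shadow maps. This uses that a quasisymmetric boundary correspondence extends quasiconformally, together with the quasidisk hypothesis. Then I would run the pull-back: define $H_{k+1}$ by lifting $H_k$ through the (branched) coverings $S_{\cH_1}, S_{\cH_2}$ on the appropriate pieces $U, V, D$, using that $\eta, \xi$ are univalent and $\nu$ is an $n$-fold branched cover with a single critical point at $0$ (where the two pairs have the \emph{same} exponent $n$, so the local models match and the lift is well-defined and still quasiconformal with the same dilatation). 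Outside the filled Julia set the maps $H_k$ stabilize after finitely many pull-backs on each piece; the key analytic input is that the dilatations $K(H_k)$ stay uniformly bounded (pull-back does not increase dilatation) so that $\{H_k\}$ is a normal family.

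Finally I would extract a limit $H = \lim H_{k_j}$, check that it is a $K$-quasiconformal homeomorphism $\Omega_{\cH_1}\to\Omega_{\cH_2}$ conjugating $S_{\cH_1}$ to $S_{\cH_2}$, and observe that since $S_{\cH_j}$ and $\cH_j$ have the same orbits, $H$ conjugates the holomorphic commuting pair $\cH_1$ to $\cH_2$ (one may further arrange $H$ to be conformal on the interior of $K(\cH_1)$, though only the quasiconformal conjugacy is claimed). I expect the main obstacle to be the bookkeeping at the critical point and on the overlap regions $U\cap V = \{0\}$, $U\setminus D$, $D\setminus U$, etc.: one must verify that the successive lifts match along these interfaces so that the $H_k$ are globally well-defined homeomorphisms and not merely defined piecewise with conflicts, and that convergence to the Julia set does not destroy this compatibility. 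The irrationality of the rotation number is what guarantees the orbit of $0$ is infinite and dense in the appropriate arc, which both forces the conjugacy to be essentially unique on the post-critical set and makes the pull-back exhaust $\Omega$; this is exactly the point where the cited references \cite{dF2,Ya1} do the careful work, so I would invoke their pull-back machinery rather than redo it.
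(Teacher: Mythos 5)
Your proposal is correct and follows essentially the same route as the paper, which does not spell out a proof but simply invokes ``a standard pull-back argument (cf.\ \cite{dF2,Ya1})'' --- i.e., exactly the combinatorial-equivalence-plus-quasiconformal-pull-back scheme you describe, with the matching critical exponent $n$ ensuring the lifts through the branched covering are well defined and dilatation-nonincreasing. Your deferral to the machinery of \cite{dF2,Ya1} for the delicate bookkeeping mirrors what the paper itself does.
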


We let the modulus of a holomorphic commuting pair $\cH$, which we denote by $\mod(\cH)$ to be the modulus of the largest annulus $A\subset \Delta$,
which separates $\CC\setminus\Delta$ from $\overline\Omega$.

\begin{defn}\label{H_mu_def}
For $\mu\in(0,1)$ let $\hol^{n}(\mu)\subset\hol^{n}$ denote the space of holomorphic commuting pairs
${\cH}:\Omega_{{\cH}}\to \Delta_{\cH}$, with the following properties:
\begin{itemize}
\item $\mod (\cH)\ge\mu$;
\item {$|I_\eta|=1$, $|I_\xi|\ge\mu$} and $|\eta^{-1}(0)|\ge\mu$; 
\item $\dist(\eta(0),\partial V_\cH)/\diam V_\cH\ge\mu$ and $\dist(\xi(0),\partial U_\cH)/\diam U_\cH\ge\mu$;
\item {the domains $\Delta_\cH$, $U_\cH\cap\HH$, $V_\cH\cap\HH$ and $D_\cH\cap\HH$ are $(1/\mu)$-quasidisks.}
\item $\diam(\Delta_{\cH})\le 1/\mu$;
\end{itemize}
\end{defn}

\noindent
We say that a real commuting pair $\zeta=(\eta,\xi)$ with
an irrational rotation number has
{\it complex {\rm a priori} bounds}, if there exists $\mu>0$ such that all renormalizations of $\zeta=(\eta,\xi)$ extend to 
holomorphic commuting pairs in $\hol^{n}(\mu)$.
The existense of complex {\it a priori} bounds is a key analytic issue 
 of renormalization theory:

\begin{thm}
\label{complex bounds}There exist universal constants $\mu=\mu({n})>0$ and $K=K({n})>1$ such that
the following holds. 
Let $\zeta\in\crit^{n}$ be a critical commuting pair with an irrational rotation number. 
Then there exists $N=N(\zeta)$ (which can be chosen uniformly in families of analytic pairs compact in the sense of Carath{\'e}odory topology, cf. \cite{Yamp-towers}) such that for all
$m\geq N$ the  commuting pair $\cR^m\zeta$ extends to a holomorphic commuting pair
$\cH_m:\Omega_m\to\Delta_m$ in $\hol^{n}(\mu)$.
The range $\Delta_m$ is a Euclidean disk.

Furthermore, let $\zeta_1$ and $\zeta_2$ be two critical commuting pairs with the same irrational rotation number and the same critical exponent ${n}$. There exists $M$ (which again can be chosen uniformly in compact families) such that for all $m\geq M$ the renormalizations 
$\cR^m\zeta_1$, $\cR^m\zeta_2$ extend to holomorphic commuting pairs $\cH_1^m$, $\cH_2^m$ which are $K$-quasiconformally conjugate.
\end{thm}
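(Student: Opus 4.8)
The plan is to prove Theorem~\ref{complex bounds} by combining the complex \emph{a priori} bounds of \cite{Ya1} (which hold for every odd $n$ {\it mutatis mutandis}) with the standard pull-back argument, paying attention to the uniformity statements. First I would invoke the main result of \cite{Ya1}: for a critical commuting pair $\zeta$ with irrational rotation number, its renormalizations eventually admit holomorphic commuting pair extensions with a definite modulus. Unwinding the definition of $\hol^{n}(\mu)$, one normalizes so that $|I_\eta|=1$; the remaining conditions --- lower bounds on $|I_\xi|$, on $|\eta^{-1}(0)|$, on the relative distances $\dist(\eta(0),\partial V)/\diam V$ and $\dist(\xi(0),\partial U)/\diam U$, the quasidisk constant of $\Delta$, $U\cap\HH$, $V\cap\HH$, $D\cap\HH$, and the diameter bound $\diam\Delta\le 1/\mu$ --- are then all consequences of the geometric control furnished by the complex bounds together with the real \emph{a priori} bounds (the real commuting pair has bounded geometry, so $\eta(0)$, $\xi(0)$, $\eta^{-1}(0)$ are all $K$-commensurable with $|I_\eta|$), shrinking $\mu$ once more if necessary. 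That the range $\Delta_m$ can be taken to be a round disk is part of the construction in \cite{Ya1}: one truncates the given range to an inscribed Euclidean disk of comparable modulus, which only improves the quasidisk constant of $\Delta$ while preserving all the other inequalities at the cost of a definite decrease in $\mu$.

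For the uniformity in Carath\'eodory-compact families, I would argue by contradiction in the usual way: if $N(\zeta)$ could not be chosen uniformly over a compact family $\cK$, there would be a sequence $\zeta_j\to\zeta_\infty\in\cK$ and indices $m_j\to\infty$ with $\cR^{m_j}\zeta_j\notin\hol^{n}(\mu)$; but Carath\'eodory convergence is compatible with renormalization on the relevant open set $\Omega$ (cf.\ the discussion of $\ceq$ and $\Omega$ in \S\ref{sec:renorm}), and the bounds of \cite{Ya1} are stable under such limits, giving a contradiction once $j$ is large. The same compactness/limiting device handles the claim that $N$ is uniform.

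For the second half of the statement --- the $K$-quasiconformal conjugacy of $\cR^m\zeta_1$ and $\cR^m\zeta_2$ for $m\ge M$ --- I would run the pull-back argument, which is exactly Theorem~\ref{qc conjugacy} made quantitative. Since $\rho(\zeta_1)=\rho(\zeta_2)$ is irrational, by the first part both $\cR^m\zeta_1$ and $\cR^m\zeta_2$ lie in $\hol^{n}(\mu)$ for $m$ past some $M$; their underlying pairs have the same rotation number $G^m(\rho)$. One starts from a quasiconformal homeomorphism between the ranges $\Delta^m_1$ and $\Delta^m_2$ respecting the boundary marked structure and the two fundamental boundary arcs, with dilatation bounded in terms of $\mu$ alone (here the $(1/\mu)$-quasidisk hypotheses and the diameter/commensurability bounds are what make the initial dilatation uniform), then lifts it successively through the shadows $S_{\cH_1^m}$ and $S_{\cH_2^m}$. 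Each lift preserves the maximal dilatation off the critical orbit; the critical point contributes nothing to the essential supremum since it is a single point, and the limiting map is the desired $K$-quasiconformal conjugacy with $K=K(\mu)=K(n)$. The genuinely delicate point, and the one I expect to be the main obstacle, is verifying that the first step --- the \emph{initial} quasiconformal identification of the two holomorphic commuting pairs on the boundaries of their ranges --- can be made with dilatation depending only on $n$: this requires that membership in $\hol^{n}(\mu)$ pins down the conformal geometry of $(\Delta,U,V,D)$ up to a uniformly quasiconformal change of coordinates, which in turn rests on the quasidisk bounds in Definition~\ref{H_mu_def} and on the real bounded geometry of the underlying pair, and is the place where one must be most careful that no hidden dependence on the combinatorics $G^m(\rho)$ creeps in.
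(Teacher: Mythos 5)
Your proposal is consistent with the paper's treatment: the paper offers no new proof of Theorem~\ref{complex bounds}, but simply cites \cite{Ya1} (Epstein class, $n=3$) and its adaptation by de~Faria--de~Melo \cite{dFdM2}, asserting the argument extends \emph{mutatis mutandis} to arbitrary odd $n$, and your outline (the complex bounds of \cite{Ya1} for the first half, the quantitative pull-back with a uniform initial quasiconformal identification for the second, and Carath\'eodory compactness for the uniformity of $N$ and $M$) is exactly the content of those cited proofs. So you take essentially the same route as the paper; there is nothing to flag beyond the fact that, like the paper, you are deferring the hard analytic work to \cite{Ya1,dFdM2}.
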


\noindent
We first proved this theorem in \cite{Ya1} for commuting pairs 
$\zeta$ in an Epstein class $\cE_s$ and ${n}=3$.
 Our proof was later adapted by de~Faria and de~Melo \cite{dFdM2} 
to the case of a non-Epstein critical commuting pair and ${n}=3$. The proof extends {\it mutatis mutandis} to the case of an arbitrary ${n}$.

We are now ready to formulate our main result:

\begin{thm}[{\bf Main Result}]
\label{thm:main}
Let $\cH\in\hol^{n}$ and $\rho(\zeta_\cH)\notin\QQ$. Then every $\RR$-symmetric, $S_{\cH}$-invariant Beltrami differential $\mu$ entirely supported on  $K(\cH)$ is trivial.
\end{thm}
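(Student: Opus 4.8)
The plan is to show that a nontrivial invariant Beltrami differential on $K(\cH)$ would force the renormalizations of $\cH$ to coincide with those of one of the rigid Blaschke model pairs constructed in \S\ref{sec:models}, contradicting the rigidity of that family. First I would invoke Theorem~\ref{qc conjugacy}: since the model Blaschke commuting pair $\cH_0$ (for the given exponent ${n}$ and the given rotation number $\rho=\rho(\zeta_\cH)$) and $\cH$ are both holomorphic commuting pairs with critical exponent ${n}$ and the same irrational rotation number, there is a quasiconformal conjugacy $h$ between them. Using $h$ I would transport the hypothetical invariant Beltrami differential $\mu$ on $K(\cH)$ to an $\RR$-symmetric Beltrami differential $\mu_0=h^*\mu$ on $K(\cH_0)$ which is invariant under the shadow $S_{\cH_0}$ and supported on $K(\cH_0)$. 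The point of moving to the model is that the model map has an explicit global holomorphic structure (a Blaschke fraction) whose dynamics we understand, and in particular whose Julia set / filled Julia set has measure-theoretic and rigidity properties we can control.

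The second and central step is to deform along $\mu_0$. For $t$ in a neighborhood of $0$ in $\CC$, let $h_t$ be the quasiconformal map solving the Beltrami equation with coefficient $t\mu_0$ (normalized $\RR$-symmetrically and fixing a suitable triple of points); then $\cH_t\equiv h_t\circ\cH_0\circ h_t^{-1}$ is again a holomorphic commuting pair with critical exponent ${n}$, varying holomorphically in $t$, with the same rotation number $\rho$, and with $\cH_0$ at $t=0$. Since $\mu_0$ is supported inside the filled Julia set, the deformation is trivial off $K(\cH_0)$; combined with the fact (from complex \emph{a priori} bounds, Theorem~\ref{complex bounds}) that after finitely many renormalizations the maps live in $\hol^{n}(\mu)$ for a fixed $\mu$, the family $\{\cR^m\cH_t\}$ stays in a compact family and depends holomorphically on $t$. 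The key structural input is then that the Blaschke model is \emph{quasiconformally rigid}: the only Beltrami differentials supported on the circle-bounded region and compatible with the Blaschke dynamics are the ones pulled back from the obvious conformal moduli, and those are exhausted by the (finite-dimensional, indeed essentially rotation-number) parameters of the model family. Hence the holomorphic curve $t\mapsto\cR^m\cH_t$ must be constant for $m$ large — equivalently the derivative in $t$ at $0$ vanishes — which forces $\mu_0$ (hence $\mu$) to be infinitesimally, and then genuinely, trivial. Concretely: if $\mu_0$ were nontrivial, $h_t$ would be a genuine (non-conformal) deformation, so some renormalization $\cR^m\cH_t$ would move; but by rigidity of the Blaschke family all holomorphic commuting pairs with exponent ${n}$ and rotation number $\rho$ arising this way are conformally conjugate to $\cH_0$, so $\mu_0$ integrates to the trivial deformation, i.e. $\mu_0\equiv0$ a.e.

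The main obstacle I expect is exactly the rigidity statement for the Blaschke models and its interface with the commuting-pair picture: one must know not merely that the model exists, but that within it there is \emph{no room} for an invariant Beltrami differential supported on the (filled) Julia set other than the trivial one, and that the qc-conjugacy $h$ faithfully transports \emph{support on $K$} between $\cH$ and $\cH_0$ (so no mass leaks onto the complement, where a deformation could be absorbed conformally). This is where the explicit construction of \S\ref{sec:models} — in particular that the Blaschke fraction restricts to a degree-${n}$ analytic circle homeomorphism and that its relevant Julia set has zero area, or equivalently that its grand orbit admits no nontrivial measurable invariant line field — does the real work; the role of the exponent ${n}$ enters only here, and replacing de~Faria's Arnold-family argument by this Blaschke-model argument is the whole content of the generalization. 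A secondary, more routine point is to make the finitely-many-renormalizations bookkeeping precise so that the holomorphic-motion argument is carried out in a fixed class $\hol^{n}(\mu)$ where compactness (Carath\'eodory) and the conjugacy of Theorem~\ref{complex bounds} are available; this is standard once the rigidity of the model is in hand.
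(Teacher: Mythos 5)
Your overall plan (pass to the Blaschke model by the qc conjugacy of Theorems~\ref{qc conjugacy}/\ref{complex bounds}, then deform along the transported differential via the Measurable Riemann Mapping Theorem) is the same as the paper's, but the step that carries all the weight --- the rigidity of the model --- is asserted rather than proved, and the way you propose to obtain and use it would not work. In the paper the model rigidity (Theorem~\ref{th:deform}) is proved by spreading the invariant differential from $K$ of the pair over the whole Julia set of the \emph{global rational map} $B_\rho$ by its dynamics, extending by the standard structure, and deforming $B_\rho$ itself with the solutions $\psi_t$ normalized at $0,1,\infty$. The deformed map is again a Blaschke fraction with critical points $0,1,\infty$, and the uniqueness statement of Theorem~\ref{th-blaschke} (Proposition~\ref{prop:unique}) --- which is the only place the arbitrary odd exponent ${n}$ is genuinely handled, and the real content of \S\ref{sec:models} --- forces it to equal $B_\rho$; then $\psi_t$ permutes each finite set $B_\rho^{-m}(1)$, hence fixes it by continuity in $t$, hence is the identity on the closure $J(B_\rho)$, and the Bers sewing lemma gives $\bar\partial\psi_t=0$ a.e. Your version deforms only the commuting pair $\cH_0$, so the global Blaschke fraction and its uniqueness never enter, and you are left quoting ``quasiconformal rigidity of the Blaschke model'' as a black box. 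Moreover your two suggested justifications do not close the gap: (i) ``the relevant Julia set has zero area'' is neither established nor needed (the identity-on-$J$ plus sewing argument is precisely how the paper avoids any area assumption); (ii) ``all pairs $\cR^m\cH_t$ are conformally conjugate to $\cH_0$, hence $\mu_0\equiv 0$'' is a non sequitur --- triviality of a deformation up to conformal conjugacy does not by itself kill a measurable invariant line field on a possibly positive-area Julia set; that implication \emph{is} the no-invariant-line-fields statement you are trying to prove, and establishing it requires the normalization plus the dense backward orbit of the critical point plus the sewing lemma.

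There is also a secondary gap in the transfer back from the model to $\cH$. Even granting Theorem~\ref{th:deform}, from $h^*\mu=0$ a.e.\ on $K(\cH_0)$ you may only conclude that $\mu$ agrees a.e.\ on $K(\cH)$ with the Beltrami coefficient of $h^{-1}$, not that $\mu=0$; and note that $h^*\mu$ is not in fact supported on $K(\cH_0)$ (off $K(\cH_0)$ it equals the coefficient of $h$), so your remark about ``no mass leaking'' misses the actual issue. To finish one must show that the conjugacy itself is conformal a.e.\ on $K$, which the paper does by applying Theorem~\ref{th:deform} a second time, to the $S$-invariant differential $(\psi^{-1})^*\sigma_0$ given by the Beltrami coefficient of the conjugacy; only then can $\mu$ be transported conformally and killed. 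This point is routine to add, but it is missing from your write-up, whereas the unproved rigidity of the model is a genuine missing idea: it is exactly the contribution of \S\ref{sec:models}, without which the argument does not generalize beyond $n=3$.
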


\noindent
Theorems \ref{existence of attractor} and \ref{renormalization hyperbolicity} follow from the above statement immediately, as the remainder of their proof for $n=3$ works for an arbitrary odd value of $n$.
The proof of \thmref{thm:main} will occupy the rest of the paper.

\section{Blaschke models}
\label{sec:models}

Recall that the subgroup of $\Aut(\hat\CC)$ which preserves the unit circle $S^1$ consists of M\"obius transformations of the form 
\begin{equation}
\label{eq-mob}
f(z)=e^{2\pi i\theta} \frac{z+a}{1+\bar az},\text{ where }a\in\CC\setminus S^1\text{ and }\theta\in\RR.
\end{equation}
A {\it Blaschke fraction} is a rational function obtained as a product of finitely many terms of the form (\ref{eq-mob}); it is called a {\it Blaschke product} if $a\in\DD$ in each of these terms.

We note the following standard fact:
\begin{thm}
Suppose a rational map $B(z)$ preserves the unit circle $S^1$. Then it is a Blaschke fraction. If it preserves the unit disk $\DD$ then it is a Blaschke product.
\end{thm}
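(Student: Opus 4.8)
The plan is to characterize rational self-maps of $S^1$ by exploiting the symmetry $z\mapsto 1/\bar z$ that fixes the unit circle pointwise-reflected. First I would set up the reflection: if $B$ is rational and $B(S^1)\subset S^1$, then $|B(z)|=1$ for $z\in S^1$, so on $S^1$ we have $\overline{B(z)}=1/B(z)$. Since $\overline{B(z)}=B^*(1/\bar z)$ where $B^*$ denotes the rational map obtained by conjugating all coefficients of $B$, and since on $S^1$ one has $1/\bar z=z$, the two rational functions $B^*(1/z)$ and $1/B(z)$ agree on the infinite set $S^1$ and hence agree identically on $\hat\CC$. This functional equation $B^*(1/z)\cdot B(z)=1$ is the engine of the whole argument.

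Next I would extract the pole/zero structure from this identity. Write $\deg B=d$ and list the zeros of $B$ in $\CC^*$ (with multiplicity) as $a_1,\dots,a_k$ and the poles in $\CC^*$ as $b_1,\dots,b_\ell$, also tracking the orders of the zero/pole at $0$ and at $\infty$. The identity $B(z)B^*(1/z)=1$ forces: a zero of $B$ at $a$ corresponds to a pole of $B^*$ at $1/a$, i.e.\ a pole of $B$ at $1/\bar a$; symmetrically poles go to zeros. Thus the zero set of $B$ is carried to the pole set of $B$ by the involution $a\mapsto 1/\bar a$, matching multiplicities, and likewise the behavior at $0$ and $\infty$ is exchanged by $z\mapsto 1/z$ in the reflected sense. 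Consequently every zero $a$ of $B$ contributes a Möbius factor of the form $\dfrac{z-a}{1-\bar a z}$ (up to the degenerate cases $a=0$, in which case the factor is just $z$, whose reciprocal-reflection pole is at $\infty$), each of which is exactly of the shape $(\ref{eq-mob})$ up to a unimodular constant. Forming the product $\tilde B(z)$ of all these factors, one over each zero of $B$, yields a Blaschke fraction with the same zeros and poles as $B$ (counting $0$ and $\infty$), so $B/\tilde B$ is a rational function with no zeros or poles in $\hat\CC$, hence a nonzero constant $c$. Evaluating at a point of $S^1$ shows $|c|=1$, so $c=e^{2\pi i\theta_0}$ and $B=e^{2\pi i\theta_0}\tilde B$ is a Blaschke fraction.

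For the second assertion, suppose additionally $B(\DD)\subset\DD$. Then $B$ has no poles in $\overline\DD$: a pole in $\DD$ would violate boundedness, and a pole on $S^1$ is excluded since $B$ maps $S^1$ into $S^1$. Hence in the factorization above every pole $1/\bar a$ lies in $\CC\setminus\overline\DD$, which forces each corresponding zero $a$ to lie in $\DD$ (the involution $a\mapsto 1/\bar a$ swaps $\DD$ and $\hat\CC\setminus\overline\DD$), and similarly the point $\infty$ cannot be a pole so the degenerate $0$-zero factors, if any, are fine. Therefore every Möbius factor has its parameter $a\in\DD$, which is precisely the definition of a Blaschke product. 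One should double check the orientation/degree bookkeeping so that no spurious factor like $1/z$ sneaks in, but a count of zeros versus poles on $\hat\CC$ rules this out since a genuine Blaschke product of the stated form is a proper self-map of $\DD$ of degree equal to the number of factors.

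The main obstacle I anticipate is purely bookkeeping: handling the possible zero or pole of $B$ at $0$ and at $\infty$ uniformly alongside the finite nonzero zeros and poles, and making sure the reflection identity is applied with the correct conjugation of coefficients (it is $B^*$, not $B$, that appears, and this distinction matters when $B$ does not have real coefficients). There is no deep analytic difficulty here — the only genuine input is that two rational functions agreeing on the infinite set $S^1$ must coincide — so the proof is essentially an exercise in tracking divisors under the anti-holomorphic involution $z\mapsto 1/\bar z$.
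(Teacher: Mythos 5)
The paper itself offers no proof of this statement --- it is quoted as a ``standard fact'' --- so your argument has to stand on its own. The route you take (the reflection identity $B(z)\,B^*(1/z)\equiv 1$ obtained by comparing two rational functions on $S^1$, the resulting anti-symmetry of the divisor of $B$ under $a\mapsto 1/\bar a$, and the comparison of $B$ with the product $\tilde B$ of elementary factors, forcing $B/\tilde B$ to be a unimodular constant) is the standard proof of this fact, and it is essentially sound; the disk case is also handled correctly, since $B(\DD)\subset\DD$ together with the reflection identity excludes poles in $\overline\DD$ and places every parameter in $\DD$. (Minor slip: with $B^*(w):=\overline{B(\bar w)}$ one has $\overline{B(z)}=B^*(\bar z)$, which equals $B^*(1/z)$ only on $S^1$; this is exactly where you use it, so nothing breaks.)

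The one genuine gap is the degenerate case you yourself flag but then wave away: a zero of $B$ at $\infty$, equivalently a pole at $0$. Your product $\tilde B$ is taken over the zeros of $B$ in $\CC$, so if $B$ has a zero at $\infty$ the quotient $B/\tilde B$ still has a pole at $0$ and a zero at $\infty$ and is not constant. Your proposed fix (``a count of zeros versus poles \dots since a genuine Blaschke product of the stated form is a proper self-map of $\DD$'') only uses disk-preservation and therefore only covers the second assertion, not the first. The case is not vacuous: $B(z)=1/z$ preserves $S^1$, yet no term of the form (\ref{eq-mob}), and hence no finite product of such terms, has a pole at $0$ (each factor is finite and nonvanishing there unless $a=0$, which gives a zero, not a pole). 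So in the circle-preserving case the correct classical conclusion is $B(z)=e^{2\pi i\theta}z^{k}\prod_j\frac{z-a_j}{1-\bar a_j z}$ with $k\in\ZZ$, i.e.\ one must either admit the factor $1/z$ into the definition of a Blaschke fraction or state the theorem with the extra $z^{k}$; your write-up should say this explicitly rather than assert that no such factor ``sneaks in.'' For the purposes of the paper this subtlety is harmless: the maps $B_n$ constructed in Theorem~\ref{th-blaschke} have their zero at $0$ and pole at $\infty$, so the degenerate factor never occurs, and the disk statement --- the one needing $a\in\DD$ in every term --- is exactly the part your argument proves completely.
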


\noindent
 We claim:
\begin{thm}
\label{th-blaschke}
Let $m\in \NN$ and set $n=2m+1$. There exists a unique Blaschke fraction $B_n(z)$ of degree $n$ with the following properties:
\begin{enumerate}
\item $B_n(1)=1$ and $1$  is a critical point of $B_n$ with multiplicity $n$;
\item $B_n(0)=0$, $B_n(\infty)=\infty$, and $B_n(z)$ has critical points at $0$ and $\infty$, both with multiplicity $m+1$;
\item $B_n(z)$ has no critical points other than $0$, $1$, and $\infty$;
\item The restriction $B_n:S^1\to S^1$ is a homeomorphism.
\end{enumerate}
\end{thm}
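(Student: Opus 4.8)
The plan is to write down $B_n$ explicitly as a product of two terms of the form (\ref{eq-mob}) and then verify the four properties by a direct computation of the critical points via the Riemann--Hurwitz count. First I would normalize: since we want $B_n(0)=0$, $B_n(\infty)=\infty$, and $B_n(1)=1$, the natural ansatz is
\begin{equation}
\label{eq-ansatz}
B_n(z) = z^{m+1}\cdot\left(\frac{z+a}{1+\bar a z}\right)^{m},
\end{equation}
with a single real parameter to be pinned down; here $a\in\CC$ will turn out to be real and negative (so $B_n$ is $\RR$-symmetric and the M\"obius factor is a genuine Blaschke factor, giving a Blaschke product rather than a general fraction — note the theorem only asserts a Blaschke fraction, but this normalization is the cleanest). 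The factor $z^{m+1}$ contributes a critical point of multiplicity $m+1$ at both $0$ and $\infty$ as required by (2); the degree is $(m+1)+m = 2m+1 = n$ as required. The condition $B_n(1)=1$ forces $1\cdot\big((1+a)/(1+\bar a)\big)^m = 1$, which for real $a$ is automatic, so instead the free parameter must be fixed by property (1): requiring $1$ to be a critical point of multiplicity exactly $n$.

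The heart of the argument is the critical-point count. A rational map of degree $n$ has $2n-2$ critical points with multiplicity. For $B_n$ as in (\ref{eq-ansatz}), compute $B_n'(z)/B_n(z) = (m+1)/z + m\big(1/(z+a) - \bar a/(1+\bar a z)\big)$. Clearing denominators, the zeros of $B_n'$ away from $0,\infty$ are the roots of a polynomial $Q(z)$ of degree $2$ (after accounting for the pole structure); together with the contributions at $0$ and $\infty$ (each of multiplicity $m$, since $B_n$ has local degree $m+1$ there) this gives $m + m + 2 = 2m+2 = 2n - 2m$ critical points — so we need $2n - 2m = 2n-2$, i.e. $m=1$?? — no: the correct bookkeeping is that $z=0$ and $z=\infty$ each absorb $m$ of the $2n-2$, leaving $2n-2-2m = 2(2m+1)-2-2m = 2m$ critical points in $\CC^*$, and we want all $2m$ of them to coincide at $z=1$. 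So the real task is to choose $a$ (real, with $|a|>1$ to keep the pole $-1/\bar a$ inside $S^1$ and the zero $-a$ outside, i.e. $a<-1$, or the reflected choice) so that the rational function $B_n'(z)$ vanishes to order exactly $2m$ at $z=1$. Equivalently, writing $B_n(z) = z\cdot\big(z^{m}(\tfrac{z+a}{1+\bar a z})^m\big)$, we want $B_n(z) - 1$ to vanish to order $n$ at $z=1$, i.e. $B_n(z) = 1 + c(z-1)^n + \dots$ locally. This is a single analytic condition on one real parameter, and I would solve it by a degree/continuity argument: show the multiplicity-of-vanishing-at-$1$ jumps to $n$ for exactly one value of $a$, using that the total critical mass is conserved and that as $a$ ranges over $(-\infty,-1)$ the two "free" critical points move continuously and collide at $1$ for a unique $a$.

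Once $a$ is fixed, property (3) follows immediately from the count: the $2n-2$ critical points are accounted for as $m$ at $0$, $m$ at $\infty$, and $2m$ at $1$, with $m+m+2m = 4m = 2n-2$, so there are no others. Property (4) — that $B_n$ restricts to a homeomorphism of $S^1$ — then follows because $B_n$ maps $S^1$ to $S^1$ (being a Blaschke fraction with pole and zero symmetric across $S^1$ in each factor), and on $S^1$ its derivative vanishes only at $z=1$, so the restriction $B_n|_{S^1}$ is a local homeomorphism away from one point and a degree-$1$ map of the circle (the winding number is $1$: the two factors contribute $m+1$ and... ) — wait, I need winding number $1$, so I should recount: $z^{m+1}$ has degree $m+1$ on $S^1$ and $(\tfrac{z+a}{1+\bar a z})^m$ with $a<-1$ has the zero $-a$ outside $\DD$, hence this Blaschke factor has degree $0$ as a self-map of $S^1$... that gives total degree $m+1$, not $1$. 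So the ansatz (\ref{eq-ansatz}) is wrong; the correct form must arrange the winding numbers to cancel to $1$. I would instead take
\begin{equation}
\label{eq-ansatz2}
B_n(z) = z^{m+1}\left(\frac{1 + \bar a z}{z + a}\right)^{m}\quad\text{with } a\in\DD,
\end{equation}
so the pole at $-a\in\DD$ and zero at $-1/\bar a$ outside: this is a Blaschke fraction (not a product), the degree on $S^1$ is $(m+1) - m = 1$, and $0,\infty$ are still critical of multiplicity $m+1$. Then pin $a\in(-1,0)$ real by the same condition that $1$ be critical of full multiplicity $n$, and run the same critical count. The main obstacle I anticipate is precisely this constraint-solving step: proving existence and uniqueness of the parameter $a$ for which all $2m$ free critical points coalesce at $z=1$. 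I expect the cleanest route is to expand $B_n(z)$ near $z=1$, impose the vanishing of the Taylor coefficients of $B_n(z) - 1$ of orders $1,\dots,n-1$, observe that $\RR$-symmetry and the product structure reduce this system to a single equation in $a$, and then verify that equation has a unique solution in $(-1,0)$ by an intermediate-value / monotonicity argument, with the endpoints $a\to 0$ and $a\to -1$ giving the two extreme (degenerate) configurations of the critical points.
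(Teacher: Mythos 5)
Your ansatz is too rigid, and the constraint-solving step that you yourself flag as the main obstacle is in fact unsolvable for every $n\geq 5$ --- which is precisely the case this theorem is needed for (the cubic case is classical). Count the conditions: with $a$ real your map is real-symmetric and circle-preserving, so writing $B(e^{i\theta})=e^{i\phi(\theta)}$ near $\theta=0$, the function $\phi$ is odd, and requiring $1$ to be a critical point of multiplicity $n$ means killing the Taylor coefficients of $\phi$ of orders $1,3,\dots,n-2$: that is $m$ independent real conditions against your single real parameter $a$ (taking $a$ complex gives $2$ parameters against $2m$ conditions --- no better). For $m=1$ this is fine and yields the familiar $z^2\frac{z-3}{1-3z}$, but for $m\geq 2$ the system is over-determined, and one can see directly that no choice of $a$ works: any degree-$n$ rational map with properties (1)--(3) satisfies $F(z)-1=(z-1)^n/G(z)$ with $G$ a polynomial of degree $m$, and the circle symmetry $F(z)=\overline{\left(F\left(1/\bar z\right)\right)^{-1}}$ forces $z^n\bar G\left(1/z\right)-G(z)=(z-1)^n$, which determines $G$ uniquely (its coefficients are, up to sign, the binomial coefficients of index $\leq m$). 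The resulting map is exactly the paper's $B_n=P/Q$, obtained by splitting the binomial expansion of $(z-1)^n$ into its high- and low-degree halves; its $m$ zeros other than the one at the origin are the roots of a degree-$m$ polynomial which for $m\geq 2$ is \emph{not} an $m$-th power of a linear factor --- e.g.\ for $n=5$ these two zeros are the complex-conjugate pair $\frac{5\pm i\sqrt{15}}{2}$, not a double real zero. Your family $z^{m+1}\left(\frac{1+\bar a z}{z+a}\right)^m$ places all of these zeros at the single real point $-1/\bar a$, so it contains no map with the required properties once $n\geq 5$; the intermediate-value/monotonicity argument you propose has nothing to converge to.

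A secondary issue: even if existence within your family could be arranged, uniqueness of the parameter $a$ inside a one-parameter ansatz would not prove the uniqueness asserted in the theorem, which is uniqueness among \emph{all} Blaschke fractions of degree $n$ with properties (1)--(4). The paper gets both at once: existence from the explicit formula $B_n=P/Q$ with $Q(z)=z^nP(1/z)$, properties (1)--(3) read off from the identity $(z-1)^n=P(z)-Q(z)$ together with the same Riemann--Hurwitz count $m+m+2m=2n-2$ that you use, property (4) by topological degree, and uniqueness from the reflection-symmetry argument sketched above, which forces $G\equiv Q$. If you want to salvage your approach, you must allow the non-fixed zeros to be $m$ distinct points (the roots of an unknown degree-$m$ real polynomial), at which point you are essentially re-deriving the paper's computation.
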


\begin{proof}
Let us set
$$P(z)=\displaystyle\sum_{k=0}^m\left(\begin{array}{c} n\\k\end{array}\right)z^{n-k}.$$
The Binomial Formula gives:
$$(z-1)^n=P(z)-Q(z),\text{ where }Q(z)=z^n P\left(\frac{1}{z}\right).$$
We set
$$B_n(z)=\frac{P(z)}{Q(z)}.$$
An elementary estimate shows that $Q(1)\neq 0$; hence, $B_n(z)-1$ has a zero of degree $n$. By construction, $B_n$ has a critical point of order $m+1$ at infinity (and, by symmetry) at $0$, and both of them are fixed.  It has no other critical points by Riemann-Hurwitz theorem. 

We claim that $B_n$ is a Blaschke fraction. Indeed, for each non-zero real root $c$ of $P(z)$ the denominator has a factor $z(\frac{1}{z}-c)$ so that
$$\frac{z-c}{z(\frac{1}{z}-c)}=\frac{z-c}{1-cz}$$ is a term of the form (\ref{eq-mob}). For each pair of complex roots $\alpha$, $\bar\alpha$, combining with the corresponding factors in the denominator, we obtain a product of two M\"obius maps of the form (\ref{eq-mob}):
$$\frac{(z-\alpha)(z-\bar\alpha)}{z^2(\frac{1}{z}-\alpha)(\frac{1}{z}-\bar\alpha)}.$$

Claim (4) follows by considerations of topological degree.

Finally, let us prove the uniqueness of $B_n$. Suppose $F(z)$ is a different Blaschke fraction with the same properties. Note that
$F(z)-1$ has a zero of order $n$ at $1$. Since $\deg F=n$,
$$F(z)-1=\frac{(z-1)^n}{G(z)},$$
where $G(z)$ is a polynomial and $G(1)\neq 0$. Moreover, considerations of branching order at $\infty$ imply that $\deg G=m$. 
Denote $\bar G(z)$ the polynomial $\overline{G(\bar z)}$.
Using the symmetry $$F(z)=\overline{\left( F\left( \frac{1}{\bar z}\right)\right)^{-1}},$$
we obtain
$$\frac{(z-1)^n+G(z)}{G(z)}=\displaystyle\frac{z^n\bar G\left(\displaystyle\frac{1}{z}\right)}{(1-z)^n+z^n\bar G\left(\displaystyle\frac{1}{z}\right)}.$$
Hence, 
$$z^n\bar G\left(\frac{1}{z}\right)-G(z)=(z-1)^n.$$
Note that the first polynomial on the left-hand side has powers of $z$ from $m+1$ to $n$, and the second one (that is, $G(z)$) has powers of $z$ from $0$ to $m$. Thus, $G\equiv Q$ and the proof is complete.
\end{proof}

\section{Proof of the Main Result}
\label{sec:proof}
Fix $\alpha\in 2\NN+1$. Let $B=B_n$ from Theorem~\ref{th-blaschke} with $n=\alpha$.  By standard considerations of continuity and monotonicity of the rotation number,  for every irrational $\rho\in\TT$ there exists a unique $\theta\in\TT$ such that setting 
$$B_\rho\equiv e^{2\pi i\theta}B,\text{ we have }\rho(B_\rho|_{S^1})=\rho.$$
For ease of reference, let us formulate:
\begin{prop}
\label{prop:unique}
Suppose $B_\rho$ is as above, and let $G$ be any other Blaschke fraction with critical points at $0$, $1$, $\infty$, which is topologically conjugate to $B_\rho$ on $\hat\CC$. Then
$$G=B_\rho.$$
\end{prop}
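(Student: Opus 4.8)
The plan is to use the rigidity of $B_n$ furnished by Theorem~\ref{th-blaschke} together with the uniqueness of the parameter $\theta$ pinning the rotation number. First I would observe that $G$, being topologically conjugate to $B_\rho$ on $\hat\CC$, is a degree-$n$ rational map with critical points at $0$, $1$, $\infty$ and no others; moreover the local degrees match (a topological conjugacy preserves multiplicities of critical points), so $G$ has a critical point of multiplicity $n$ at $1$ and of multiplicity $m+1$ at $0$ and $\infty$. Next I would record that a topological conjugacy carries fixed points to fixed points: the dynamics of $B_\rho$ on $\hat\CC$ has exactly three fixed points in $\hat\CC\setminus S^1$, namely the critical points $0$ and $\infty$ (both superattracting, since $B_n(0)=0$, $B_n(\infty)=\infty$) and possibly no others, while on $S^1$ the map $B_\rho|_{S^1}$ has irrational rotation number, hence no periodic points on the circle at all. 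Therefore the conjugacy $h$ must fix $\{0,\infty\}$ setwise, and — since $h$ conjugates the two superattracting fixed points, which are distinguished from one another only by whether $\rho$ or $\bar\rho$ appears as the local rotation data, but here both are genuine superattracting basins of the same multiplier $0$ — after possibly composing with $z\mapsto 1/\bar z$ (which $B_n$ commutes with by construction) we may assume $h(0)=0$, $h(\infty)=\infty$. Likewise $h$ must send the critical point $1$ of $B_\rho$ to the critical point $1$ of $G$, which forces $G(1)=1$.

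With these normalizations, $G$ is a Blaschke fraction of degree $n$ satisfying exactly properties (1)--(4) of Theorem~\ref{th-blaschke}, except that $G$ is a priori $e^{2\pi i\psi}B_n$ for some rotation $\psi$ rather than $B_n$ itself — because properties (1)--(3) only constrain $G$ up to post-composition by a rotation of $S^1$ fixing $1$, and the proof of uniqueness in Theorem~\ref{th-blaschke} used the normalization $B_n(1)=1$, which is $G(1)=1$, already secured above. Actually the cleaner route: the uniqueness argument in the proof of Theorem~\ref{th-blaschke} shows that any Blaschke fraction of degree $n$ with a critical point of multiplicity $n$ at $1$, critical points of multiplicity $m+1$ at $0$ and $\infty$ which are fixed, and $F(1)=1$, must equal $B_n$; applying this to $G$ gives $G=B_n$. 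It then remains to see $G=B_\rho$, i.e.\ $B_n=B_\rho=e^{2\pi i\theta}B_n$, which would force $\theta\in\ZZ$. But this need not hold — so the correct statement is that $G$, a Blaschke fraction with the listed critical data, is of the form $e^{2\pi i\psi}B_n=B_{\rho'}$ for some irrational $\rho'$, and the topological conjugacy on $\hat\CC$ restricts to a topological conjugacy on $S^1$, hence $\rho(G|_{S^1})=\rho(B_\rho|_{S^1})=\rho$, so by the uniqueness clause ("there exists a unique $\theta$") we get $\psi=\theta$ and $G=B_\rho$.

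Let me therefore reorganize the steps in the order I would carry them out: (i) upgrade the topological conjugacy to control of critical points and their multiplicities, concluding $G$ is a degree-$n$ Blaschke fraction — here I use the standard fact quoted before Theorem~\ref{th-blaschke} that a rational map preserving $S^1$ is a Blaschke fraction — with critical set exactly $\{0,1,\infty\}$ and the stated multiplicities; (ii) use that the conjugacy must match fixed points and superattracting basins, together with the symmetry $z\mapsto 1/\bar z$ enjoyed by both maps, to normalize so that $0$, $1$, $\infty$ are fixed by $G$ with $G(0)=0$, $G(1)=1$, $G(\infty)=\infty$; (iii) invoke the uniqueness computation from the proof of Theorem~\ref{th-blaschke} — write $G(z)-1=(z-1)^n/\tilde G(z)$ with $\deg\tilde G=m$, $\tilde G(1)\ne 0$, exploit the circle symmetry to derive $z^n\overline{\tilde G}(1/z)-\tilde G(z)=(z-1)^n$, and match powers of $z$ to get $\tilde G\equiv Q$, hence $G$ equals $B_n$ up to a rotation, i.e.\ $G=e^{2\pi i\psi}B_n$ for some $\psi\in\RR$; (iv) restrict the conjugacy to $S^1$ to get $\rho(G|_{S^1})=\rho$, and apply the uniqueness of $\theta$ to conclude $\psi\equiv\theta\pmod{1}$ and therefore $G=B_\rho$.

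I expect step (ii) to be the main obstacle: one has to rule out that the conjugacy $h$ does something perverse with the circle dynamics or the basins of $0$ and $\infty$. The key points making it work are that $B_\rho|_{S^1}$ has no periodic orbit (irrational rotation number), so $h$ cannot map a point of $S^1$ to a fixed point of $G$, forcing $h(S^1)$ to be the common boundary of the two superattracting basins and $h$ to preserve the pair $\{0,\infty\}$; and that swapping $0\leftrightarrow\infty$ can be absorbed by the involution $\tau(z)=1/\bar z$, which conjugates every Blaschke fraction preserving $S^1$ to itself when the fraction already commutes with $\tau$ — both $B_n$ (by the construction $Q(z)=z^nP(1/z)$) and any $G$ arising here do, since a real-symmetric Blaschke fraction fixing $0$ and $\infty$ with these degrees automatically commutes with $\tau$. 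Once the fixed-point normalization is in place, steps (iii) and (iv) are the polynomial bookkeeping already done in Theorem~\ref{th-blaschke} plus the one-line continuity/monotonicity argument that defined $\theta$.
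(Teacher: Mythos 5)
The paper states Proposition~\ref{prop:unique} without any proof, so your attempt is supplying details the author left implicit; the skeleton you choose is the natural one: a topological conjugacy preserves local degrees and fixed critical points, so $G$ is a degree-$n$ Blaschke fraction whose only critical points are $0,1,\infty$, with local degree $n$ at $1$ and $m+1$ at $0,\infty$, and with $G(0)=0$, $G(\infty)=\infty$; the uniqueness computation of Theorem~\ref{th-blaschke} then pins $G$ down to $e^{2\pi i\psi}B_n$; finally matching rotation numbers on $S^1$ should give $\psi\equiv\theta$. But as written your argument has two genuine defects. The first is an internal inconsistency about $G(1)$: you initially claim that $h(1)=1$ ``forces $G(1)=1$'', which is false, since $1$ is not a fixed point of $B_\rho$ ($B_\rho|_{S^1}$ has irrational rotation number and hence no fixed points on the circle); all one gets is $G(1)=h(B_\rho(1))$. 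You correctly retract this mid-proof (set $e^{2\pi i\psi}:=G(1)$ and apply the uniqueness computation to $e^{-2\pi i\psi}G$), but your final reorganized plan, step (ii), reinstates ``$G(1)=1$'', which would yield $G=B_n$, a map with a fixed critical point on $S^1$, contradicting the conjugacy to a fixed-point-free circle dynamics. Only the retracted version is tenable, and the write-up should be purged of the other.

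The second defect is substantive: step (iv) silently assumes that the conjugacy restricted to $S^1$ preserves the rotation number, but rotation number is invariant only under orientation-preserving circle conjugacies, and nothing in the hypotheses as stated guarantees this. Concretely, since $Q(z)=z^nP(1/z)$ one has $B_n(1/z)=1/B_n(z)$, so the M\"obius map $z\mapsto 1/z$ conjugates $B_\rho$ to $e^{-2\pi i\theta}B_n$, a Blaschke fraction with critical points at $0,1,\infty$ whose circle rotation number is $1-\rho\neq\rho$; thus the conclusion $G=B_\rho$ requires the conjugacy to preserve the orientation of $S^1$ (equivalently, not to exchange $0$ and $\infty$ while preserving the orientation of $\hat\CC$). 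Your device of absorbing a swap of $0$ and $\infty$ by composing with $z\mapsto 1/\bar z$ cannot repair this: that involution is the identity on $S^1$, so it changes neither $h|_{S^1}$ nor its orientation. In the paper's actual use (Theorem~\ref{th:deform}) the conjugating maps $\psi_t$ are quasiconformal and fix $0,1,\infty$, hence preserve $\DD$ and the orientation of $S^1$, and the argument closes; your proof should either impose this normalization explicitly or prove orientation preservation. A smaller omission of the same kind: $h(S^1)=S^1$ needs a one-line justification (by Yoccoz's theorem $S^1=\overline{\bigcup_{k\ge 0}B_\rho^k(1)}$, and since $h(1)=1$ and $G$ preserves $S^1$, the Jordan curve $h(S^1)$ is contained in $S^1$, hence equals it) rather than being asserted.
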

By Theorem~\ref{complex bounds}, there exists $N\in\NN$ such that for all $m\geq N$ and $\rho\notin\QQ$, the renormalization $\cR^mB_\rho$ extends to a holomorphic commuting pair $\cH$ in $\hol^{n}(\mu({n}))$. 

\begin{thm}
\label{th:deform}
Suppose $\cH$ is a holomorphic commuting pair as above. Then every Beltrami differential $\mu$ which is $S^1$-symmetric, invariant under $S_{\cH}$ satisfies
$$\mu=0\text{ a.e. on }K(\cH).$$
\end{thm}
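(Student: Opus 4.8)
The plan is to deduce Theorem~\ref{th:deform} from the rigidity of the Blaschke model $B=B_\alpha$ together with Theorem~\ref{thm:main}'s pull-back machinery, reversing the flow of information: a nontrivial invariant Beltrami differential on $K(\cH)$ would produce, via the dynamics, a nontrivial invariant line field for $B$ on $\hat\CC$, and hence a genuine quasiconformal deformation of $B$ through Blaschke fractions with the prescribed critical points, contradicting Proposition~\ref{prop:unique}.

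First I would set up the transfer of the Beltrami differential from the renormalization level back to the original Blaschke map. Recall $\cH$ is a holomorphic commuting pair extending $\cR^m B_\rho$ for $m\geq N$. The cylinder renormalization picture from \S\ref{sec:renorm} realizes $\cren^{\,\cdot} f$ as first-return maps to fundamental crescents $C_f$, and the orbits of $S_\cH$ coincide with orbits of iterates of $B$ (up to the conformal identifications). So a measurable $S_\cH$-invariant, $S^1$-symmetric Beltrami differential $\mu$ supported on $K(\cH)$ can be spread by the (finitely many) branches of $B$ and their inverses over the grand orbit of $K(\cH)$ under $B$. Because $K(\cH)\subset S^1$ (the filled Julia set of a holomorphic commuting pair for a circle map lies on the equator/circle), and the grand orbit of $S^1$ under $B$ is $S^1$ itself, we obtain a $B$-invariant Beltrami differential $\tilde\mu$ on $S^1$ — i.e. a measurable invariant line field on a forward-invariant subset of $S^1$ of positive length (if $\mu\not\equiv 0$). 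Extending by the symmetry $z\mapsto 1/\bar z$ and by zero off the grand orbit gives a $B$-invariant Beltrami differential $\hat\mu$ on $\hat\CC$ with $\|\hat\mu\|_\infty<1$, real-symmetric, and supported away from the critical points $0,1,\infty$ (those are a measure-zero set anyway).

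Next I would run the deformation argument. Let $\phi^t$ be the family of quasiconformal maps solving the Beltrami equation with coefficient $t\hat\mu$ for $|t|$ small, normalized to fix $0,1,\infty$. Then $B^t\equiv\phi^t\circ B\circ(\phi^t)^{-1}$ is again a rational map of degree $n=\alpha$ preserving a Jordan curve (the image of $S^1$, which is a quasicircle through $0,1,\infty$) — after post-composing with the Riemann map of that quasidisk, or more cleanly: since $\hat\mu$ is $S^1$-symmetric one arranges $\phi^t(S^1)=S^1$, so $B^t$ preserves $S^1$ and by the standard fact quoted before Theorem~\ref{th-blaschke} is a Blaschke fraction, of degree $\alpha$, topologically conjugate to $B$, with critical points exactly at $\phi^t(0)=0$, $\phi^t(1)=1$, $\phi^t(\infty)=\infty$. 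By Proposition~\ref{prop:unique} (applied to $B_\rho = e^{2\pi i\theta}B$; one checks the extra rotation factor is harmless, or simply works with $B$ itself and its critical-point normalization), $B^t=B$ for all small $t$. A now-standard argument (differentiate in $t$ at $0$; the derivative of $\phi^t$ is a quasiconformal vector field whose $\bar\partial$ is $\hat\mu$) forces $\hat\mu=0$ a.e., hence $\mu=0$ a.e. on $K(\cH)$.

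The main obstacle I anticipate is the first step: rigorously transporting $\mu$ from the holomorphic-commuting-pair level (where it lives on $K(\cH)\subset\CC/\ZZ$, inside a fundamental crescent obtained after $m$ cylinder renormalizations and the conformal uniformization $\overline{C_f\cup f^{q_N}(C_f)}\setminus\{p_1,p_2\}\to\CC/\ZZ$) all the way back to a genuine $B$-invariant field on $\hat\CC$, keeping track of the real symmetry and of the $\|\cdot\|_\infty<1$ bound, and verifying that the grand orbit of $K(\cH)$ under $B$ covers enough of $S^1$ that nontriviality of $\mu$ forces nontriviality of $\hat\mu$. Here one must use that $S_\cH$ and $B$ share orbits as sets and that $K(\cH)$ has full measure in the relevant piece of the post-critical Cantor set (or that $J(\cH)$ carries the line field); the complex a priori bounds of Theorem~\ref{complex bounds} (giving $\cH\in\hol^n(\mu(n))$ with round range) are exactly what make the uniformization and the pull-backs uniformly well-behaved. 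A secondary, more routine point is confirming that in Proposition~\ref{prop:unique} the phrase ``topologically conjugate to $B_\rho$'' is satisfied by $B^t$ — which holds since $\phi^t$ is the conjugacy — and that the rotation factor $e^{2\pi i\theta}$ can be absorbed, so that uniqueness of the Blaschke model really does apply.
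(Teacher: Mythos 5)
Your overall strategy is the same as the paper's: spread $\mu$ by the dynamics of $B_\rho$ to an invariant Beltrami differential $\hat\mu$ on $\hat\CC$, deform by solving the Beltrami equation for $t\hat\mu$ with the normalization fixing $0,1,\infty$, and use $S^1$-symmetry plus Proposition~\ref{prop:unique} to conclude that the conjugated map is again $B_\rho$. However, there are two genuine problems. First, your claim that $K(\cH)\subset S^1$ (and that the grand orbit stays in $S^1$) is false and would trivialize the theorem: the filled Julia set of a holomorphic commuting pair is a compact planar subset of $\Omega_\cH$ which a priori may have positive area --- that is exactly why an invariant-line-field argument is needed at all; if $K(\cH)$ lay on the circle it would have zero area and there would be nothing to prove. (Also $B^{-1}(S^1)\neq S^1$ here, since $B$ has degree $n$ but is injective on $S^1$.) The correct statement, which the paper uses, is that $\overline{\cup_{k\ge 0}B_\rho^{-k}(K(\cH))}=J(B_\rho)$, so the spread differential $\lambda_1$ is supported on the Julia set of $B_\rho$, not on the circle.

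Second, and more seriously, your final step is a gap: from $\phi^t\circ B_\rho\circ(\phi^t)^{-1}=B_\rho$ for all $t$ one cannot conclude $\hat\mu=0$ by ``differentiating in $t$.'' For example, $z\mapsto z^2$ carries many nontrivial invariant Beltrami differentials supported on its Fatou set, and the corresponding $\phi^t$, normalized to fix $0,1,\infty$, conjugates $z^2$ to itself; so ``$B^t=B$ for all $t$'' is perfectly compatible with $\hat\mu\neq 0$. What makes the conclusion true is the support condition together with a rigidity-on-$J$ step you omit, and this is precisely how the paper finishes: since $\psi_t\circ B_\rho\circ\psi_t^{-1}=B_\rho$, the map $\psi_t$ permutes each finite set $B_\rho^{-m}(1)$; by continuity in $t$ and $\psi_0=\mathrm{Id}$ it fixes every point of these sets; since $\cup_m B_\rho^{-m}(1)$ is dense in $J(B_\rho)$, we get $\psi_t=\mathrm{Id}$ on $J(B_\rho)$. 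Because $\lambda_1$ is supported on $J(B_\rho)$, $\psi_t$ is conformal off $J(B_\rho)$, and the Bers Sewing Lemma then yields $\bar\partial\psi_t=t\lambda_1=0$ a.e., hence $\mu=0$ a.e.\ on $K(\cH)$. Your infinitesimal variant could perhaps be made rigorous (vanishing of the invariant quasiconformal vector field at the dense set of repelling periodic points, plus a removability argument across $J$), but none of that is supplied, so as written the step ``$B^t=B$ forces $\hat\mu=0$'' does not stand.
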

\begin{proof}
Let $\mu_1$ be the Beltrami differential which is given by $\mu$ on $K(\cH)$, and the trivial differential $\sigma_0$ elsewhere on $\Delta_\cH$. By invariance of $K({\cH})$, we have
$$S^*_\cH\mu_1=\mu_1\text{ a.e.}$$
Note that $K(\cH)=J(\cH)$ (cf. \cite{Yamp-towers}).
Extend $\mu_1$ by the dynamics of $B_\rho$ to a $B_\rho$-invariant, $S^1$-symmetric Beltrami differential supported on 
$$\overline{\cup_{k\geq 0}B_\rho^{-k}(K(\cH))}=J(B_\rho)$$
(the equality is implied by the standard properties of Julia sets, cf. e.g. \cite{Mil}). Complete it to a $B_\rho$-invariant Beltrami diffential $\lambda_1$ in $\hat\CC$ by setting it equal to $\sigma_0$ on the complement of the Julia set of $B_\rho$. For $t\in[0,1]$, set
$$\lambda_t=t\lambda_1,$$
so that $\lambda_0=\sigma_0$. By Ahlfors-Bers-Boyarski Theorem, there exists a unique solution $\psi_t:\hat\CC\to\hat\CC$ of the Beltrami equation
$$\psi_t^*\sigma_0=\lambda_t$$
which fixes the points $0$, $1$, and $\infty$ and continuously depends on $t$. By $S^1$-symmetry, the rational map
$$G\equiv \psi_t\circ B_\rho\circ\psi_t^{-1}$$
is a Blaschke fraction. By 
Proposition~\ref{prop:unique}, 
$$G=B_\rho.$$
Hence, for every $t$ and $m$, the map $\psi_t$ permutes the discrete set $B_\rho^{-m}(1)$. Since $\psi_0=\text{Id}$ and $\psi_t$ continuously depends on $t$, $\psi_t$ fixes the points in each  $B_\rho^{-m}(1)$. By the standard properties of Julia sets,
$$\overline{\cup_{m\geq 0}B_\rho^{-m}(1)}=J(B_\rho).$$
Hence, for all $t\in[0,1]$, 
$$\psi_t|_{J(B_\rho)}=\text{Id}.$$
By Bers Sewing Lemma,
$$\bar\partial\psi_t=t\lambda_1=0\text{ a.e.}$$

\end{proof}

We now conclude:
\begin{proof}[Proof of Theorem \ref{thm:main}]
By Complex Bounds, there exist $\rho\notin\QQ$ and $k\in\NN$ such that the renormalization $\cR^k(B_\rho)$ extends to a holomorphic commuting pair $\cG$ which is $K$-quasiconformally conjugate to $\cH$. Denote $\psi$ such a conjugacy. The claim will follow from the above result if we can show that $\bar\partial\psi=0$ a.e. on $K(\cH)$. Indeed, assume the contrary. Then 
$(\psi^{-1})^*\sigma_0$ is a non-trivial invariant Beltrami differential on $K(\cG)$, which contradicts Theorem~\ref{th:deform}.
\end{proof}

\bibliographystyle{amsalpha}
\bibliography{biblio}
\end{document}